\newtheorem{theorem}{Theorem}
\newtheorem{lemma}[theorem]{Lemma}
\newtheorem{cor}[theorem]{Corollary}
\theoremstyle{definition}
\newtheorem{definition}[theorem]{Definition}
\newtheorem{example}[theorem]{Example}
\theoremstyle{remark}
\newcommand\mcl[1]{\multicolumn{2}{|l|}{#1}}
\numberwithin{equation}{section}
\title{Symmetric sequencings and other combinatorial properties \\ of large groups}
\author{ Mohammad Javaheri  \\
515 Loudon Road \\
Siena University, School of Science\\
Loudonville, NY 12211, USA\\
\small{mjavaheri@siena.edu}  
}
\begin{document}

\maketitle

\begin{abstract}
We prove that Anderson's conjecture on symmetric sequencings and Bailey's conjecture on 2-sequencings hold for sufficiently large groups. In addition, we discuss extensions of partial harmonious sequences and partial R-sequencings. Several further results on double sequencings are presented, both in the context of abelian groups and for sufficiently large non-abelian groups.
\end{abstract}

\section{Introduction}
In 1961, motivated by the study of Latin squares, Gordon defined the notion of a sequenceable group \cite{G}. A finite group $G$ of order $n > 1$ is {\it sequenceable} if there exists an ordering $b_1, b_2, \dots, b_{n}$ of its elements with $b_1=1_G$, the identity element of $G$, such that the partial products $a_i = b_1 \cdots b_i, 1\leq i  \leq n$, are all distinct. Such sequencings provide a constructive method to generate row-complete Latin squares with applications in combinatorial design theory.

Gordon \cite{G} also proved that a finite abelian group is sequenceable if and only if it is binary (having a unique element of order 2). He also established that the dihedral groups of orders 6 and 8, are not sequenceable. Subsequently, Isbell and Li \cite{Isbell,Li} proved all dihedral groups of order at least $10$ are sequenceable, and Anderson \cite{AI2} proved that every solvable binary group, except the quaternion group, is sequenceable. 

These results, together with extensive computational evidence, underpin Keedwell’s conjecture \cite{Keedwell}: Every non-abelian group of order at least 10 is sequenceable. The strongest confirmation of Keedwell's conjecture was provided recently by M\"{u}esser and Pokrovskiy \cite{MP} who prove that all sufficiently large groups are sequenceable. We use their results to establish several other combinatorial properties of sufficiently large groups. 

Let ${\bf g}: g_1,\ldots, g_{m}$ be a sequence in a group $G$. For $1\leq i \leq m$, let 
$$\bar g_i=g_{i-1}^{-1}g_i \quad \hat g_i=g_{i-1}g_i,$$ 
where $g_{0}=g_m$. We define three derived sequences and several related combinatorial properties related to sequenceability. Given a sequence $g_1,g_2,\ldots, g_m$ in $G$, let
\begin{align*}
{\bf \bar g}&:  g_1, \bar{g}_2,\ldots, \bar{g}_{m},\\
{\bf \check g}&:  \bar{g}_1,\bar{g}_2,\ldots, \bar{g}_{m},\\
{\bf \hat g}&:  \hat{g}_1, \hat{g}_2,\ldots, \hat{g}_{m}.
\end{align*}

\begin{definition}
For a subset $A$ of a finite group $G$, let ${\cal P}_k(A)$ denote the set of sequences in $A$ that contain every element of $A$ exactly $k$ times. 
 \begin{itemize}
 \item[i)] $A$ is {\it sequenceable} if $\exists {\bf g} \in {\cal P}_1(A)$ such that $g_1=1_G$ and ${\bf \bar g} \in {\cal P}_1(A)$, in which case ${\bf \bar g}$ is called a {\it sequencing} of $G$. It is {\it doubly sequenceable} if $\exists {\bf g} \in {\cal P}_2(A)$ such that $g_1=1_G$ and ${\bf \bar g} \in {\cal P}_2(A)$.
  
 \item[ii)] $A$ is {\it R-sequenceable} if $\exists {\bf g} \in {\cal P}_1(A\setminus \{1_G\})$ such that ${\bf \check g} \in {\cal P}_1(A\setminus \{1_G\})$, in which case ${\bf \check g}$ is called an {\it R-sequencing} of $A$. It is {\it doubly R-sequenceable} if $\exists {\bf g} \in {\cal P}_2(A\setminus \{1_G\})$ such that ${\bf \check g} \in {\cal P}_2(A\setminus \{1_G\})$
 
 \item[iii)] $A$ is {\it symmetric sequenceable} if $\exists {\bf g} \in {\cal P}_1(A)$ with $g_1=1_G$ such that ${\bf \bar g} \in {\cal P}_1(A)$ and $\bar{g}_i\bar{g}_{m+2-i}=1_G$ for all $1\leq i \leq m$, where $|A|=m$. 
 
 \item[iv)] $A$ is {\it $2$-sequenceable} if there exists a sequence $g_1,\ldots, g_m$ of elements of $A$ with $g_1=1_G$ such that ${\bf \bar g} \in {\cal P}_1(A)$ and for each element $g\in A$:
 \begin{align*}
 g^2 \neq 1_G & \Rightarrow \#\{1\leq i \leq m: \bar{g}_i=g~\mbox{\it or}~\bar{g}_i=g^{-1}\}=2,\\
  g^2 = 1_G & \Rightarrow \#\{1\leq i \leq m: \bar{g}_i=g \}=1.
 \end{align*}

 \item[v)] $A$ is {\it harmonious} if $\exists {\bf g} \in {\cal P}_1(A)$ such that ${\bf \hat g} \in {\cal P}_1(A)$, in which case ${\bf g}$ is called a {\it harmonious sequence} in $A$. It is {\it doubly harmonious} if $\exists {\bf g} \in {\cal P}_2(A)$ such that ${\bf \hat g} \in {\cal P}_2(A)$. 
 
 \item [v)] $A$ is {\it R-harmonious} if $A\setminus \{1_G\}$ is harmonious, and it is {\it doubly R-harmonious} if $A\setminus \{1_G\}$ is doubly harmonious. 
 \end{itemize}
\end{definition}

 Harmonious sequences and R-sequencings of a group $G$ naturally give rise to {\it complete mappings} (a bijection $\phi: G \rightarrow G$ such that $x\mapsto x \phi(x)$ is also a bijection). Hall and Paige \cite{HP} proved that a necessary condition for a group to admit a complete mapping is that its Sylow $2$-subgroups are either trivial or non-cyclic (now known as the Hall-Paige condition). They conjectured that this condition is also sufficient. The Hall-Paige conjecture was eventually confirmed in 2009 by Bray, Evans, and Wilcox \cite{Bray,Evans,Wilcox}. 

\begin{center}
    \begin{table}
    \renewcommand\arraystretch{1.2}
      \centering
\begin{tabular}{|*{4}{l|l|c|c|c|c|}}
  \hline
\mcl{\textbf{}}      
    &   \textbf{S}    &   \textbf{RS}     & \textbf{H} & \textbf{RH} & \textbf{SS} & \textbf{2S} & \textbf{D}
        \\ \hline \hline
\multirow{3}{15mm}{Abelian order $\geq 4$}

& binary $\neq \mathbb{Z}_2$
& \checkmark & \ding{53}    & \ding{53} & \ding{53} & \checkmark &\checkmark & \checkmark
        \\
  \cline{2-9} 
    &   $(\mathbb{Z}_2)^n,~n\geq 2$ & \ding{53} &  \checkmark   & \ding{53}  &\checkmark & \ding{53} & \ding{53}  &\checkmark
            \\
       \cline{2-9} 
    &  all other $\neq \mathbb{Z}_3$ &   \ding{53}  &   \checkmark    & \checkmark    & \checkmark   & \ding{53}    &\checkmark  &\checkmark 
                       \\                  \hline \hline

  \multirow{4}{20mm}{Binary\\ order $\geq 10$}  
  
  &   $Q_{8n}$, $n \geq 2$  &\checkmark &  \checkmark  & \checkmark &     & \checkmark  & \checkmark  &C     \\ 
\cline{2-9}
    &  $\mathbb{Z} \Lambda$    &   \checkmark    &    \ding{53}       & \ding{53}   &\ding{53}   & \checkmark   & \checkmark    &C
                       \\     
                       \cline{2-9}
    &  solvable $Q \Lambda$ $\neq Q_8$    &   \checkmark   &      &   & & \checkmark & \checkmark &C        \\     
    \cline{2-9}

    & non-solvable $Q \Lambda$
        &   C    &                &   &   & C  & C &C
                       \\     \hline \hline

 \multirow{1}{20mm}{Dihedral \\ order $\geq 10$}
     
    &   order $4n\geq 12$    & \checkmark  &  \checkmark   &    &   & \ding{53}   & \checkmark   & C
            \\
          \cline{2-9} 
    &  order $4n+2 \geq 10$   &  \checkmark &   \ding{53}  & \ding{53}  &\ding{53}  & \ding{53}  &\checkmark  &C
                       \\     \hline \hline

\mcl{Non-abelian odd-order}  
        & C     &  &  \checkmark   &  &\ding{53}  &\checkmark  &C
               \\      \hline \hline
\multirow{3}{20mm}{Non-abelian large order}
    &   HP and binary   & \checkmark  &  \checkmark   & \checkmark  &\checkmark   &\checkmark & \checkmark  & \checkmark
            \\   
         \cline{2-9} 
    &  binary not HP   &   \checkmark   &   \ding{53}             & \ding{53}   & \ding{53}   & \checkmark & \checkmark  & \checkmark
 \\
         \cline{2-9} 
    & HP not binary  &   \checkmark &   \checkmark           & \checkmark & \checkmark & \ding{53} & \checkmark & \checkmark
\\
  \cline{2-9} 
    &  not HP not binary   &   \checkmark   &   \ding{53}    & \ding{53}  & \ding{53}  & \ding{53}  & \checkmark  & \checkmark
                       \\     
     \hline
\end{tabular}
\label{tab1}
\caption{ {\bf S}: Sequenceable, {\bf RS}: R-sequenceable, {\bf H}: harmonious, {\bf RH}: R-harmonious, {\bf SS}: symmetric sequenceable. {\bf 2S}: 2-sequenceable, {\bf D}: doubly sequenceable, doubly R-sequenceable, doubly harmonious, and doubly R-harmonious, {\bf $\mathbb{Z}\Lambda$}: binary groups whose Sylow 2-subgroups are cyclic, {\bf $Q\Lambda$}: binary groups whose Sylow 2-subgroups are non-cyclic, $Q_{8n}=\langle a,b: a^{4n}=e,b^2=a^{2n}, ab=ba^{-1} \rangle$ denotes the dicyclic group of order $8n$,  {\bf HP}: the group satisfies the Hall–Paige condition, {\bf C}: a conjecture exists that the answer is affirmative for the corresponding class/property. Empty cells have no explicit conjecture recorded in the literature, but the property is widely expected to hold.}
    \end{table}
 \end{center}

Table \ref{tab1} and the following discussion summarize many of the known results on sequenceability and several related sequencing properties, incorporating the contributions of the present article. For a comprehensive and regularly updated survey of sequenceable groups, 2-sequencings (terraces), R-sequencings, harmonious sequences, and related structures, the reader is referred to Ollis~\cite{Ollis}.
 
\begin{itemize}
\item {\bf Sequenceability.} Sequenceable groups include all solvable binary groups (including abelian groups) except the quaternion group $Q_8$ \cite{AI2,G}, dihedral groups of order $2n\geq 10$ \cite{Isbell, Li}, and all sufficiently large non-abelian groups \cite{MP}.

\item {\bf Symmetric sequenceability.} Symmetric sequenceable groups include all solvable binary groups except $Q_8$ \cite{AI2}. Anderson stated the following conjecture in 1993:

\medskip

\noindent {\bf Conjecture 1}. (Anderson \cite{Anderson}) {\it All binary groups except $Q_8$ are symmetric sequenceable. }

\medskip

We prove that if $G$ is a sufficiently large binary group, then $G$ is symmetric sequenceable (Cor.\ \ref{ss}).

\item {\bf R-sequenceability.} R-sequenceable groups include all non-binary abelian groups \cite{AKP}, dihedral groups of order $4n\geq 4$ \cite{K2}, dicyclic groups of order $8n\geq 16$ \cite{WL}, and all sufficiently large groups that satisfy the Hall-Paige condition \cite{MP}.

\item {\bf 2-sequenceability.} 2-sequenceable groups include all abelian groups except the elementary 2-groups of order at least 4 \cite{Ollis3,O1,OW,OW2}, and all non-abelian odd-order groups \cite{AI1}. Bailey stated the following conjecture in 1984:

\medskip

\noindent {\bf Conjecture 2.} (Bailey \cite{B}) {\it All finite groups, except the elementary $2$-groups of order at least $4$, are $2$-sequenceable.}

\medskip

We prove that if $G$ is a sufficiently large group that is not an elementary 2-group, then $G$ is $2$-sequenceable (Cor.\ \ref{2s}).  

\item {\bf Harmoniousness.} Harmonious groups include all odd-order groups, all abelian groups except the elementary 2-groups and binary groups, dihedral groups of order $8n\geq 8$ \cite{JG}, dicyclic groups of order $8n\geq 16$ \cite{Wang}, and all sufficiently large groups that satisfy the Hall-Paige condition \cite{MP}.

If $G$ is a non-binary abelian group and $G\neq \mathbb{Z}_3$, then $G$ is R-harmonious \cite{JG} (Beals et al.\ used the term $\#$-harmonious). If $G$ is a binary abelian group with a unique element $\imath_G$ of order 2, then $G\backslash \{\imath_G\}$ is harmonious \cite{DJ}. See Corollary \ref{bun} for similar results in sufficiently large groups. 

\item {\bf Double sequences.} All abelian groups are doubly sequenceable \cite{Jav} and doubly harmonious \cite{DJ}. If $G$ is an abelian group of order at least 4, then $G$ is doubly R-harmonious \cite{DJ}. We prove that every abelian group except $\mathbb{Z}_2$ is doubly R-sequenceable (Theorem \ref{drs}). This completes the investigation in the abelian case of the conjectures stated in \cite{DJ,Jav}:

\medskip

\noindent {\bf Conjecture 3}. {\it All groups of order at least 4 are doubly sequenceable, doubly R-sequenceable, doubly harmonious, and doubly R-harmonious. }

\medskip

In addition, we verify this conjecture for sufficiently large groups in Theorem \ref{bun2}.

\end{itemize}

In Section 2, we prove that all abelian groups except $\mathbb{Z}_2$ are doubly R-sequenceable (Theorem \ref{drs}). In Section 3, we verify Anderson's and Bailey's conjectures for sufficiently large groups (Cor.\ \ref{2} and \ref{ss}). We also discuss supersequenceable groups and prove that all sufficiently large groups are supersequenceable (Cor.\ \ref{superseq}). In Section \ref{4}, we discuss extensions of partial harmonious sequences and partial R-sequencings and prove several results on sufficiently large groups that complement the existing results in abelian groups. 

\section{DR-sequenceable groups}\label{2}

In this section, we prove that all abelian groups except $\mathbb{Z}_2$ are doubly R-sequenceable (DR-sequenceable). Every abelian group that is not binary is R-sequenceable, hence DR-sequenceable. Thus, suppose that $G=\mathbb{Z}_{2^k} \times H$, where $H$ is an odd-order abelian group and $k\geq 1$. We first consider the case $k=1$. 

A {\it symmetric harmonious} sequence in a group $H$ of order $n$ is a harmonious sequence $h_0,\ldots, h_{n-1}$ of $H$ with $h_0=1_H$ such that 
$$h_ih_{n-i}=1_H~\mbox{for all}~1\leq i \leq n-1.$$

If $H$ is an odd-order group, then $\mathbb{Z}_2 \times H$ is symmetric sequenceable \cite{AI2} (since it is solvable and binary) and doubly harmonious \cite{DJ}, but not harmonious, R-sequenceable, or R-harmonious (since it does not satisfy the Hall-Paige condition). We next prove that such groups are doubly R-sequenceable. To illustrate the proof idea, consider the example of $G=\mathbb{Z}_2 \times \mathbb{Z}_5$. To obtain a DR-sequencing of $G$, we start with a symmetric harmonious sequence ${\bf h}: 0,1,2,3,4$ in $\mathbb{Z}_5$. Then double the sequence and shuffle in the additive inverses to obtain:
$$\underline{1},4,\underline{2},3,\underline{3},2,\underline{4},1,0,\underline{1},4,\underline{2},3,\underline{3},2,\underline{4},1,0.$$
Next, we pair each number with $0$ or $1$ according to the pattern below, where the element $(a,b) \in \mathbb{Z}_2 \times \mathbb{Z}_5$ is denoted by $\begin{matrix}a \\ b \end{matrix}$. 
$$\overbrace{\begin{matrix}1 \\ 1 \end{matrix},\begin{matrix}1 \\ 4 \end{matrix},\begin{matrix}0 \\ 2 \end{matrix},\begin{matrix}0 \\ 3 \end{matrix},\begin{matrix}1 \\ 3 \end{matrix},\begin{matrix}1 \\ 2 \end{matrix},\begin{matrix}0 \\ 4 \end{matrix},\begin{matrix}0 \\ 1 \end{matrix}}^\text{$1100$},\begin{matrix}1 \\ 0 \end{matrix},\overbrace{\begin{matrix}0 \\ 1 \end{matrix},\begin{matrix}1 \\ 4 \end{matrix},\begin{matrix}1 \\ 2 \end{matrix},\begin{matrix}0 \\ 3 \end{matrix}}^\text{$0110$},\overbrace{\begin{matrix}1 \\ 3 \end{matrix},\begin{matrix}0 \\ 2 \end{matrix},\begin{matrix}0 \\ 4 \end{matrix},\begin{matrix}1 \\ 1 \end{matrix}}^\text{$1001$},\begin{matrix}1 \\ 0 \end{matrix},$$
and the consecutive differences are
$$\begin{matrix}0 \\ 1 \end{matrix},\begin{matrix}0 \\ 3 \end{matrix},\begin{matrix}1 \\ 3 \end{matrix},\begin{matrix}0 \\ 1 \end{matrix},\begin{matrix}1 \\ 0 \end{matrix},\begin{matrix}0 \\ 4 \end{matrix},\begin{matrix}1 \\ 2 \end{matrix},\begin{matrix}0 \\ 2 \end{matrix},\begin{matrix}1 \\ 4 \end{matrix},\begin{matrix}1 \\ 1 \end{matrix},\begin{matrix}1 \\ 3 \end{matrix},\begin{matrix}0 \\ 3 \end{matrix},\begin{matrix}1 \\ 1 \end{matrix},\begin{matrix}1 \\ 0 \end{matrix},\begin{matrix}1 \\ 4 \end{matrix},\begin{matrix}0 \\ 2 \end{matrix},\begin{matrix}1 \\ 2 \end{matrix},\begin{matrix}0 \\ 4 \end{matrix},$$
which is a DR-sequencing of $\mathbb{Z}_2 \times \mathbb{Z}_5$. 

\begin{theorem}\label{DR-2}
If $H$ is a nontrivial odd group, then $G=\mathbb{Z}_2 \times H$ is DR-sequenceable. 
\end{theorem}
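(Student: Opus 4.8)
The plan is to carry out in general the computation shown for $\mathbb{Z}_2\times\mathbb{Z}_5$. Write $G=\mathbb{Z}_2\times H$ additively, so that $\bar g_t=g_t-g_{t-1}$, let $|H|=n$ (odd), and start from a symmetric harmonious sequence ${\bf h}:h_0=0,h_1,\dots,h_{n-1}$ of $H$, so $h_{n-i}=-h_i$ and the cyclic sums $\hat h_j=h_{j-1}+h_j$ run over all of $H$. Such a sequence exists for every nontrivial odd abelian group (for $\mathbb{Z}_n$ the natural order $0,1,\dots,n-1$ works, and the $\mathbb{Z}_3\times\mathbb{Z}_3$ case is already instructive); I would record the general case as a preliminary lemma, obtained either by an explicit transversal construction or a direct-product argument. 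From ${\bf h}$ I form the length $2(2n-1)=2\,|G\setminus\{0\}|$ word whose $H$-coordinates are the block
$$h_1,-h_1,h_2,-h_2,\dots,h_{n-1},-h_{n-1},0$$
written twice, so that each nonzero element of $H$ occurs four times (once in an $h$-slot and once in a $(-h)$-slot per copy) and $0$ occurs twice. I then attach a bit $\varepsilon_t\in\mathbb{Z}_2$ to each slot, set $g_t=(\varepsilon_t,u_t)$ with $u_t$ the $H$-coordinate, and must choose the bits so that both ${\bf g}\in{\cal P}_2(G\setminus\{0\})$ and ${\bf \check g}\in{\cal P}_2(G\setminus\{0\})$.

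The decisive bookkeeping is at the level of the $H$-coordinate differences. Inside a consecutive pair $(h_i,-h_i)$ the $H$-difference is $-2h_i$, and since $|H|$ is odd the map $x\mapsto -2x$ is a bijection, so as $i$ runs from $1$ to $n-1$ these within-pair differences hit every nonzero element of $H$ exactly once per copy. Between pairs, and across the two $0$-slots, the $H$-differences are precisely the harmonious sums $\hat h_j$; by harmoniousness each of the two transitions (copy $1\to$ copy $2$ and copy $2\to$ copy $1$) contributes every element of $H$, including $0$, exactly once. Thus at the $H$-level each nonzero value already appears four times among the differences and $0$ appears twice, so the only remaining job of the bits is to split these between the two $\mathbb{Z}_2$-cosets and to keep the identity $(0,0)$ out of both words.

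For the decoration I would make the pairs \emph{constant} in the first copy (bit $p_i$ on both entries of the $i$-th pair) and \emph{alternating} in the second copy (bits $q_i,1-q_i$), which forces the within-pair $\mathbb{Z}_2$-differences to be $0$ in copy $1$ and $1$ in copy $2$; hence the within-pair differences supply each $(0,w)$ and each $(1,w)$ with $w\neq0$ exactly once. I put $\varepsilon=1$ on both $0$-slots, which removes the identity from the element word and provides the two required copies of $(1,0)$ among the differences. The two surviving demands — that each nonzero $w$ receive two $0$-bits and two $1$-bits among its four element-occurrences, and that the two between-pair differences carrying a fixed $\hat h_j$ split between the cosets (both bits $1$ exactly when $\hat h_j=0$) — translate, using $h_{n-i}=-h_i$, into the $\mathbb{Z}_2$-linear conditions $p_i+p_{n-i}=1$ and $q_i=q_{n-i}$ together with conditions forcing $r_i:=p_i+q_i$ to equal $1$ on the indices $1,\dots,(n-1)/2$ and $0$ on $(n+1)/2,\dots,n-1$. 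One checks this system is consistent — the middle term, where $\hat h_{(n+1)/2}=0$, is handled automatically by the symmetry relations — and solvable, for instance by taking $q_i\equiv0$ and $p_i=1$ for $i\le(n-1)/2$, $p_i=0$ otherwise; the final verification that both multisets equal ${\cal P}_2(G\setminus\{0\})$ is then immediate from the two identities of the previous paragraph.

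The main obstacle is exactly this coordination of the bits: they must balance the element word and the difference word simultaneously, and the delicate points are the two $0$-slots and the central pair where the harmonious sum vanishes, where an unlucky choice would re-introduce the identity into one of the words. Everything else is rigid, being dictated by the within-pair value $-2h_i$ and the between-pair value $\hat h_j$. A secondary point to settle first is the existence of a symmetric harmonious sequence in an arbitrary nontrivial odd abelian $H$, on which the whole construction rests.
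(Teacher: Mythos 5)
Your proposal is correct and follows essentially the same route as the paper's proof: the paper likewise doubles the symmetric harmonious sequence into pairs $(h_i,h_i^{-1})$ separated by identity slots, uses constant bits within each pair in the first copy ($S_1$) and alternating bits within each pair in the second copy ($S_2$), places bit $1$ on both identity slots, and verifies via the within-pair differences $h_i^{-2}$ and the between-pair harmonious sums. The only differences are cosmetic: your bit assignment ($q_i\equiv 0$, $p_i=1$ exactly for $i\le (n-1)/2$) is a different solution of the same $\mathbb{Z}_2$-linear system that the paper's parity-and-position-based pattern also solves, and you explicitly flag the existence of symmetric harmonious sequences in nontrivial odd abelian groups as a needed preliminary lemma, a point the paper assumes without comment.
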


\begin{proof}
Let ${\bf h}:h_0,h_1,\ldots, h_{n-1}$ be a symmetric harmonious sequence in $H$, where $n=|H|$. Define the sequence $S_1$ by 
$$S_1:~(1,h_1),(1,h_1^{-1}),(0,h_2),(0,h_2^{-1}),\ldots, (0,h_{n-1}),(0,h_{n-1}^{-1}),$$
where the $i$th pair in $S_1$ is given by $(1,h_i),(1,h_i^{-1})$ if $i$ is odd, and it is given by $(0,h_i),(0,h_i^{-1})$ if $i$ is even. Define the sequence $S_2$ by 
$$S_2:~(0,h_1),(1,h_1^{-1}),(1,h_2),(0,h_2^{-1}),\ldots, (0,h_{n-1}),(1,h_{n-1}^{-1}),$$
where the $i$th pair in $S_2$ is given by $(0,h_i), (1,h_i^{-1})$ if $i<n/2$ is odd and $i>n/2$ is even, and it is given by $(1,h_i),(0,h_i^{-1})$ if $i<n/2$ is even or $i>n/2$ is odd. We claim that the sequence 
$${\bf g}:~S_1, (1,h_0), S_2, (1,h_0)$$
is a DR-sequencing in $G$. 

To be more precise, given $0\leq i \leq 4n-3$, let $j=\lceil i/2 \rceil +1 \pmod n$. Define
$$g_i=\begin{cases}
(1,h_{j}) & \mbox{if $0\leq i\leq 2n-2$ and $i=0 \pmod 4$};\\
(1,h_{j}^{-1}) & \mbox{if $0\leq i\leq 2n-2$ and $i=1 \pmod 4$};\\
(0,h_{j}) & \mbox{if $0\leq i\leq 2n-2$ and $i=2 \pmod 4$};\\
(0,h_{j}^{-1}) & \mbox{if $0\leq i\leq 2n-2$ and $i=3 \pmod 4$};\\
(0,h_{j}) & \mbox{if $2n-2< i\leq 3n-3$ and $i=1 \pmod 4$};\\
(1,h_{j}^{-1}) & \mbox{if $2n-2< i\leq 3n-3$ and $i=2 \pmod 4$};\\
(1,h_{j}) & \mbox{if $2n-2< i\leq 3n-3$ and $i=3 \pmod 4$};\\
(0,h_{j}^{-1}) & \mbox{if $2n-2< i\leq 3n-3$ and $i=0 \pmod 4$};\\
(1,h_{j}) & \mbox{if $3n-3< i\leq 4n-3$ and $i=1 \pmod 4$};\\
(0,h_{j}^{-1}) & \mbox{if $3n-3< i\leq 4n-3$ and $i=2 \pmod 4$};\\
(0,h_{j}) & \mbox{if $3n-3< i\leq 4n-3$ and $i=3 \pmod 4$};\\
(1,h_{j}^{-1}) & \mbox{if $3n-3< i\leq 4n-3$ and $i=0 \pmod 4$};\\
\end{cases}
$$
It follows that:
$$\bar g_i=\begin{cases}
(0,h_0h_1) & \mbox{if $i=0$};\\
(1,h_{j-1}h_{j}) & \mbox{if $0< i\leq 2n-2$ and $i=0 \pmod 4$};\\
(0,h_{j}^{-2}) & \mbox{if $0\leq i\leq 2n-2$ and $i=1 \pmod 4$};\\
(1,h_{j-1}h_{j}) & \mbox{if $0\leq i\leq 2n-2$ and $i=2 \pmod 4$};\\
(0,h_{j}^{-2}) & \mbox{if $0\leq i\leq 2n-2$ and $i=3 \pmod 4$};\\
(0,h_{j-1}h_{j}) & \mbox{if $2n-2< i\leq 3n-3$ and $i=1 \pmod 4$};\\
(1,h_{j}^{-2}) & \mbox{if $2n-2< i\leq 3n-3$ and $i=2 \pmod 4$};\\
(0,h_{j-1}h_{j}) & \mbox{if $2n-2< i\leq 3n-3$ and $i=3 \pmod 4$};\\
(1,h_{j}^{-2}) & \mbox{if $2n-2< i\leq 3n-3$ and $i=0 \pmod 4$};\\
(0,h_{j-1}h_{j}) & \mbox{if $3n-3< i\leq 4n-3$ and $i=1 \pmod 4$};\\
(1,h_{j}^{-2}) & \mbox{if $3n-3< i\leq 4n-3$ and $i=2 \pmod 4$};\\
(0,h_{j-1}h_{j}) & \mbox{if $3n-3< i\leq 4n-3$ and $i=3 \pmod 4$};\\
(1,h_{j}^{-2}) & \mbox{if $3n-3< i\leq 4n-3$ and $i=0 \pmod 4$};\\
\end{cases}
$$
It is straightforward to check that $\bar g_0,\bar g_1,\ldots, \bar g_{4n-3}$ is a DR-sequencing of $G$. \end{proof}

For the case $G=\mathbb{Z}_{2^k} \times H$ and $k \geq 2$, we will need a sequencing of $G$ where the second term is of a specific form. The following theorem proves a more general statement. 
 
We can first demonstrate the idea of the proof by an example. To construct a symmetric sequencing of $\mathbb{Z}_4 \times \mathbb{Z}_3$, we start with the sequence ${\bf k}:0,1,3,2$ which gives the symmetric sequencing ${\bf \bar k}: 0, 1, 2, 3$ in $\mathbb{Z}_4$. Then repeat the sequence ${\bf k}$ three times in alternating forms of regular and reverse order:
$$\underbrace{0,1,3,2}_\text{regular},\underbrace{2,3,1,0}_\text{reverse},\underbrace{0,1,3,2}_\text{regular}.$$
Next, we pair the terms of a symmetric harmonious sequence ${\bf h}:0,1,2$ in $\mathbb{Z}_3$ with the half-blocks as follows:
$${\bf g}:\overbrace{\begin{matrix} 0 \\ 0 \end{matrix}, \begin{matrix} 0 \\ 1 \end{matrix}}^\text{00},\overbrace{\begin{matrix} 1 \\ 3 \end{matrix}, \begin{matrix} 2 \\ 2 \end{matrix},\begin{matrix} 1 \\ 2 \end{matrix},\begin{matrix} 2 \\ 3 \end{matrix}}^\text{12}, \overbrace{\begin{matrix} 2 \\ 1 \end{matrix},\begin{matrix} 1 \\ 0 \end{matrix}, \begin{matrix} 2 \\ 0 \end{matrix}, \begin{matrix} 1 \\ 1 \end{matrix}}^\text{21},\overbrace{\begin{matrix} 0 \\ 3 \end{matrix},\begin{matrix} 0 \\ 2 \end{matrix}}^\text{00},
$$
and the consecutive differences are:
$${\bf \bar g}:\begin{matrix} 0 \\ 0 \end{matrix}, \begin{matrix} 0 \\ 1 \end{matrix},\begin{matrix} 1 \\ 2 \end{matrix}, \begin{matrix} 1 \\ 3 \end{matrix},\begin{matrix} 2 \\ 0 \end{matrix},\begin{matrix} 1 \\ 1 \end{matrix}, \begin{matrix} 0 \\ 2 \end{matrix},\begin{matrix} 2 \\ 3 \end{matrix}, \begin{matrix} 1 \\ 0 \end{matrix}, \begin{matrix} 2 \\ 1 \end{matrix},\begin{matrix} 2 \\ 2 \end{matrix},\begin{matrix} 0 \\ 3 \end{matrix},
$$
which is a symmetric sequencing of $\mathbb{Z}_4 \times \mathbb{Z}_3$. 

\begin{theorem}\label{newseqgen}
Let $G=K \times H$, where $K$ is a symmetric sequenceable group and $H$ is an odd group. Then $G$ is symmetric sequenceable. 
\end{theorem}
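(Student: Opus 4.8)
The plan is to produce a symmetric sequencing of $G=K\times H$ by interleaving a symmetric sequencing of $K$ with a symmetric harmonious sequence of $H$, generalising the displayed $\mathbb{Z}_4\times\mathbb{Z}_3$ computation. Write $m=|K|$ and $n=|H|$. Since $K$ is symmetric sequenceable the anti-palindromic listing $\bar{\bf k}$ leaves exactly one self-inverse element besides $1_K$, so $K$ is binary: $m$ is even and $K$ has a unique involution $\iota$, which, being the only involution, is central. Since $n$ is odd, $1_H$ is the only self-inverse element of $H$ and $x\mapsto x^{2}$ is a bijection of $H$. Fix a sequence ${\bf k}:k_1,\dots,k_m$ with $k_1=1_K$ whose difference sequence $\bar{\bf k}$ is a symmetric sequencing of $K$; using centrality of $\iota$ one checks that the last partial product telescopes to $k_m=\iota$. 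Fix also a symmetric harmonious sequence ${\bf h}:h_0,\dots,h_{n-1}$ of $H$ (so $h_0=1_H$, $h_ih_{n-i}=1_H$, and ${\bf h}$ is harmonious), available since $H$ has odd order, as already used in Theorem~\ref{DR-2}.

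For the $K$-coordinate I would set $g_{tm+j}^{(K)}=\iota^{t}k_j$ for $0\le t\le n-1$ and $1\le j\le m$, i.e. lay down $n$ rounds of length $m$, the $t$-th being the left translate $\iota^{t}k_1,\dots,\iota^{t}k_m$ of the partial-product list; for abelian $K$ this is exactly the regular/reverse alternation of the example, since there $\iota+k_j=k_{m+1-j}$. Translation does not change consecutive quotients, and $k_m=\iota$ forces the quotient at every round boundary to be $1_K$, so each round reproduces the block $\bar k_1,\dots,\bar k_m$; hence the $K$-part of $\bar{\bf g}$ is $\bar{\bf k}$ repeated $n$ times, and the $K$-classes of $\bar{\bf g}$ are precisely the residue classes of the position modulo $m$. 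For the $H$-coordinate I would cut the $mn$ positions into blocks of lengths $m/2,m,\dots,m,m/2$ (two half-blocks and $n-1$ full blocks), give the two half-blocks the constant value $h_0$, and on full block $b$ ($1\le b\le n-1$) alternate the $H$-coordinate between $h_b$ and $h_{n-b}=h_b^{-1}$. The decisive feature is that these blocks are offset by $m/2$ from the $K$-rounds, so every block boundary lands at a position congruent to $m/2+1\pmod m$.

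The verification falls into three parts. That ${\bf g}\in{\cal P}_1(G)$ with $g_1=1_G$ is clear from the description. For $\bar{\bf g}\in{\cal P}_1(G)$ it suffices, the $K$-part being periodic, to show that for each residue $j$ the $H$-quotients taken over the $n$ rounds exhaust $H$. The offset does exactly this: at the single residue $j=m/2+1$ every position is a block boundary, where the $H$-quotient equals the harmonious product $h_{b-1}h_b$, so these quotients run over all of $H$; at every other residue the position is a block interior, where the $H$-quotient is $1_H$ inside a half-block and $h_b^{\pm 2}$ inside full block $b$, so over the rounds one obtains $\{1_H\}\cup\{x^{\pm 2}:x\in H\setminus\{1_H\}\}=H$ because squaring is a bijection of the odd group $H$. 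Thus each residue class receives every element of $H$ exactly once and $\bar{\bf g}$ is a sequencing.

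It remains to check the symmetric condition $\bar g_i\bar g_{mn+2-i}=1_G$ coordinatewise. The $K$-coordinate is $\bar{\bf k}$ repeated an odd number of times with the involution $(\iota,1_H)$ at the central position, so the anti-palindromy $\bar k_{m+2-i}=\bar k_i^{-1}$ gives the $K$-part at once. The $H$-coordinate is anti-palindromic because the layout is symmetric about the centre: the pairing $i\leftrightarrow mn+2-i$ carries full block $b$ onto full block $n-b$, whose alternation has quotients inverse to those of block $b$, while the boundary products satisfy $(h_{b-1}h_b)^{-1}=h_{n-b}h_{n-b+1}$; both facts rest on the symmetry $h_ih_{n-i}=1_H$. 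I expect the crux to be precisely the middle step: arranging the $H$-coordinate so that the $m/2$ offset routes the harmonious products into the lone boundary residue and the squares into all the others, together with the parity and one-position-shift bookkeeping (from the ``$+2$'') needed to confirm anti-palindromy of the $H$-quotients. This is exactly where a merely harmonious, as opposed to symmetric harmonious, sequence of $H$ would not suffice.
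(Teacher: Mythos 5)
Your proposal is correct and takes essentially the same route as the paper: the paper likewise interleaves a symmetric sequencing of $K$, repeated $|H|$ times in alternating forward/reversed rounds, with a symmetric harmonious sequence of $H$ laid out in blocks offset by half of $|K|$ and alternating $h_b$ with $h_b^{-1}$, then verifies the quotient and anti-palindromy conditions exactly as you outline. Your only cosmetic deviation is describing the odd rounds as left-translates by $\iota$ rather than reversals, but these coincide, since $\iota k_j = k_{|K|+1-j}$ holds for any symmetric sequencing with central involution $\iota$, so the two constructions yield the same sequence.
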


\begin{proof}
Since $K$ is symmetric sequenceable, there exists ${\bf k}:1_K=k_0,\ldots, k_{n-1}$, a permutation of elements of $K$, such that ${\bf \bar k}: 1_K=k_0, \bar k_1,\ldots, \bar k_{n-1}$ is also a permutation of elements of $K$ and 
$$\bar k_i  \bar k_{n-i}=1_K~\mbox{for all}~1\leq i \leq n-1.$$
Let $\alpha_0,\alpha_1,\ldots, \alpha_{m-1}$ be a symmetric harmonious sequence in $H$ (with $\alpha_m=\alpha_0=1_H$). Given $0\leq i \leq mn-1$, write $i=qn+r$, where $0\leq q \leq m-1$ and $0\leq r \leq n-1$.
We define a sequence ${\bf g}:~g_0,\ldots, g_{mn-1}$ in $G$ by letting  $s=(-1)^r$ and
\begin{equation}\label{g11}
g_i=\begin{cases}
\left ( k_r , \alpha_q^s \right )& \mbox{if $q$ is even and $0\leq r<n/2$},\\
\left ( k_r , \alpha_{q+1}^s\right )& \mbox{if $q$ is even and $n/2 \leq r \leq n-1$},\\
\left ( k_{n-1-r}, \alpha_q^s \right ) & \mbox{if $q$ is odd and $0\leq r <n/2$},\\
\left ( k_{n-1-r}, \alpha_{q+1}^s \right ) & \mbox{if $q$ is odd and $n/2 \leq r \leq n-1$}.
\end{cases}
\end{equation}
It follows that
$$\bar g_i =\begin{cases}
\left (1_K,\alpha_q^{2} \right ) & \mbox{if $r=0$},\\
\left (\bar k_r,\alpha_q^{2s} \right ) & \mbox{if $0< r<n/2$},\\
\left (\bar k_r,\alpha_q \alpha_{q+1} \right ) & \mbox{if $r=n/2$},\\
\left ( \bar k_r , \alpha_{q+1}^{2s}\right )& \mbox{if $n/2 < r \leq n-1$}.\\
\end{cases}
$$
It is straightforward to check that ${\bf \bar g}$ is a sequencing of $G$. To see it is a symmetric sequencing, given $1\leq i \leq mn-1$, write $i=qn+r$, where $0\leq q \leq m-1$ and $0\leq r \leq n-1$. Consider the case where $0<r<n/2$. Then $mn-i=mn-qn-r=(m-q-1)n+n-r$, and so
$${\bar g_i}{\bar g_{mn-i}}=(\bar k_r, \alpha_q^{2s})\cdot (\bar k_{n-r},\alpha_{m-q}^{2s})=(1_K,1_H)=1_G.$$
If $r=n/2$, then 
$${\bar g_i}{\bar g_{mn-i}}=(\bar k_r, \alpha_q \alpha_{q+1})\cdot (\bar k_{n-r}, \alpha_{m-q-1}\alpha_{m-q})=1_G.$$
The proof in other cases is similar, and it follows that $G$ is symmetric sequenceable. 
\end{proof}

We are now ready to prove the main theorem of this section.   

\begin{theorem}\label{drs}
Every abelian group except $\mathbb{Z}_2$ is DR-sequenceable. 
\end{theorem}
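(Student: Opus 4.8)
The plan is to prove DR-sequenceability by splitting on the $2$-structure of the abelian group $G$ and reusing the two constructions already in hand. First, if $G$ is not binary, then $G$ is R-sequenceable by \cite{AKP}, and I claim that every R-sequenceable group is DR-sequenceable: given an R-sequencing $g_1,\dots,g_r$ with the convention $g_0=g_r$ and $r=|G|-1$, the concatenation $g_1,\dots,g_r,g_1,\dots,g_r$ lies in ${\cal P}_2(G\setminus\{1_G\})$, and since the wraparound convention makes the seam difference $g_r^{-1}g_1$ coincide with the initial difference $\check g_1$, the full list of consecutive differences is exactly two copies of $\check g_1,\dots,\check g_r$ and hence also lies in ${\cal P}_2(G\setminus\{1_G\})$. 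Thus the doubled sequence is a DR-sequencing, and this disposes of every non-binary abelian group at once.

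It remains to treat binary groups, which I write as $G=\mathbb{Z}_{2^k}\times H$ with $H$ of odd order and $k\geq 1$; the excluded group $\mathbb{Z}_2$ is precisely the case $k=1$ with $H$ trivial. When $k=1$ and $H$ is nontrivial, Theorem \ref{DR-2} applies verbatim. For $k\geq 2$ (allowing $H$ trivial) the idea is to build a DR-sequencing explicitly from a sequencing of $G$ whose second term lies in the cyclic $2$-part $\mathbb{Z}_{2^k}\times\{1_H\}$. Such a sequencing is exactly what Theorem \ref{newseqgen} supplies: since $\mathbb{Z}_{2^k}$ is a solvable binary group other than $Q_8$, it is symmetric sequenceable by \cite{AI2}, so taking $K=\mathbb{Z}_{2^k}$ yields a symmetric sequencing of $G$ with the required second term. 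Starting from this scaffold I would double the sequence and interleave inverses as in the DR-sequencing of $\mathbb{Z}_2\times\mathbb{Z}_5$, while weaving in the $\mathbb{Z}_{2^k}$-coordinate through a fixed block pattern built from the symmetric sequencing of $\mathbb{Z}_{2^k}$, in the manner of the symmetric-sequencing construction illustrated for $\mathbb{Z}_4\times\mathbb{Z}_3$, so that both the resulting length-$2(|G|-1)$ sequence and its consecutive differences realize every nonidentity element precisely twice.

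\textbf{Main obstacle.} The crux is the explicit construction for $k\geq 2$ and its verification; the non-binary and $k=1$ cases are routine given \cite{AKP} and Theorem \ref{DR-2}. Concretely, the difficulty is to exhibit one uniform block pattern—valid for all $k\geq 2$ and all odd $H$, including $H$ trivial, rather than merely in small examples—for which one can simultaneously certify that the length-$2(|G|-1)$ sequence lies in ${\cal P}_2(G\setminus\{1_G\})$ and that its difference sequence lies in ${\cal P}_2(G\setminus\{1_G\})$. The delicate points in this bookkeeping are the exclusion of the identity, the unique involution $\imath_G$ (whose inverse is itself, so it behaves differently from the inverse-paired elements that the symmetric scaffold organizes), and the seam and wraparound terms where consecutive blocks meet. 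Reconciling the block pattern with the symmetric-sequencing scaffold at these boundary positions, uniformly in $k$, is where the real work lies.
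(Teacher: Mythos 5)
Your case division coincides with the paper's, and two of the three cases are fine: the non-binary case via \cite{AKP} (your doubling argument that any R-sequencing concatenated with itself gives a DR-sequencing is correct), and the case $G=\mathbb{Z}_2\times H$ with $H$ nontrivial via Theorem \ref{DR-2}. But the proposal stops exactly where the theorem's real content begins. For $k\geq 2$ you offer only a plan---``double the sequence and interleave inverses \dots{} while weaving in the $\mathbb{Z}_{2^k}$-coordinate through a fixed block pattern''---and then explicitly concede that exhibiting and verifying such a pattern, uniformly in $k$ and $H$, is an unresolved obstacle. That concession marks a genuine gap, not routine bookkeeping: the interleaving device of Theorem \ref{DR-2} exploits the fact that in $\mathbb{Z}_2\times H$ every element outside the $2$-part has a distinct inverse to pair it with, and there is no evident way for a block pattern of that type to handle the involution and the block seams when the $2$-part is $\mathbb{Z}_{2^k}$ with $k\geq 2$. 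As written, the proposal does not prove the theorem for any group $\mathbb{Z}_{2^k}\times H$ with $k\geq 2$, including $\mathbb{Z}_4$ itself.

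The idea you are missing is that no interleaving or block pattern is needed at all. The paper fixes the particular symmetric sequencing ${\bf k}: 0,1,-1,2,-2,\ldots,n/2$ of $\mathbb{Z}_{2^k}$ (where $n=2^k$), so that the sequencing ${\bf g}: 0,g_1,\ldots,g_{mn-1}$ of $G$ produced by \eqref{g11} has the two controlled endpoints $g_1=(1,0)$ and $g_{mn-1}=(n/2,0)$, the unique involution. It then sets $\alpha=1+mn/2$, which is a unit modulo $mn=|G|$ precisely because $k\geq 2$ (this is where $k=1$ fails and Theorem \ref{DR-2} is needed instead), so that $x\mapsto\alpha x$ is an automorphism of $G$, and considers the concatenation
$$g_1,\ldots,g_{mn-1},\ \alpha g_1,\ldots,\alpha g_{mn-1}.$$
Every nonidentity element occurs twice among the terms. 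The internal differences of the first half form $G\setminus\{0,g_1\}$, those of the second half form its automorphic image $G\setminus\{0,\alpha g_1\}$, so the entire verification collapses to checking that the two seam differences are $g_1$ and $\alpha g_1$; both follow from the single identity $(\alpha-1)g_1=(mn/2)(1,0)=(n/2,0)=g_{mn-1}$ (using that $m$ is odd, together with $g_{mn-1}$ having order $2$ and $\alpha$ being odd for the wraparound seam). This automorphism trick is what replaces your ``uniform block pattern'' and turns the $k\geq 2$ case into a two-line computation; without it, or something equally concrete, your argument is incomplete.
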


\begin{proof}
If $G$ is not binary, then $G$ is R-sequenceable \cite{AKP}, hence DR-sequenceable. Thus, suppose that $G=\mathbb{Z}_{2^k} \times H$, where $H$ is an odd group. The case where $k=1$ and $H$ is nontrivial follows from Theorem \ref{DR-2}. Thus, suppose that $k\geq 2$. Let $n=2^k$ and $m=|H|$. Consider the sequences
\begin{align*}
{\bf k}: &0, 1, -1,2, -2,\ldots, n/2,\\
{\bf \bar k}: & 0, 1, -2, 3, -4, \ldots, n-1,
\end{align*}
in $\mathbb{Z}_{2^k}$, where ${\bf \bar k}$ is a symmetric sequencing. By Theorem \ref{newseqgen}, there exists a sequence ${\bf g}: 0, g_1,\ldots, g_{mn-1}$ in $G$ such that both ${\bf g}$ and ${\bf \bar g}$ are permutations of $G$. Moreover, by \eqref{g11}, we have $g_1=(1,0) \in \mathbb{Z}_{2^k} \times H$ and $g_{mn-1}=(n/2,0)$, the unique element of order 2 in $G$. 

Let $\alpha=1+mn/2$. Since $\gcd(\alpha, mn)=1$, the map $g \mapsto \alpha g$ is an isomorphism of $G$. Consider the sequence:
$$g_1,g_2,\ldots, g_{mn-1}, \alpha g_1,\alpha g_2, \ldots, \alpha g_{mn-1}.$$
Each element of $G \backslash \{0_G\}$ appears twice in the sequence. Moreover, the consecutive differences are:
$$-\alpha g_{mn-1}+g_1,\bar g_2, \bar g_3, \ldots, \bar g_{mn-1}, -g_{mn-1}+\alpha g_1,\alpha \bar g_2, \ldots, \alpha \bar g_{mn-1}.$$
To show that this sequence also contains every element of $G\backslash \{0_G\}$ twice, it is sufficient to show that $-g_{mn-1}+\alpha g_1=g_1$ and $-\alpha g_{mn-1}+g_1=\alpha g_1$. Both of these equations follow from $(\alpha-1)g_1=(mn/2) g_1=(mn/2)(1,0)=(n/2,0)=g_{mn-1}$ in $G$ (since $m$ is odd, $mn/2=n/2 \pmod{n}$).
\end{proof}

\section{Anderson's and Bailey's conjectures}\label{3}

M\"{u}esser and Pokrovskiy \cite{MP} used techniques from probabilistic combinatorics to prove that all sufficiently large groups are sequenceable. To implement their results, we have the following definition based on their notation. 

\begin{definition}
Let $G$ be a group and $A,B\subseteq G$ such that $|A|=m$ and $|B|=m-1$. A {\it rainbow Hamilton sequence} in $K^+[A;B]$ is a sequence $g_1,g_2,\ldots, g_m$ such that $A=\{g_1,\ldots, g_{m}\}$, and $B=\{g_1g_2,\ldots, g_{m-1}g_m\}$. A {\it rainbow Hamilton sequence} in $K^-[A;B]$ is a sequence $g_1,g_2,\ldots, g_m$ such that $A=\{g_1,\ldots, g_{m}\}$, and $B=\{g_1^{-1}g_2,\ldots, g_{m-1}^{-1}g_m\}$. In both cases, the sequence is said to be from $g_1$ to $g_{m}$. 
\end{definition} 

Let $\pi: G \rightarrow G^{ab}$ be the abelianization projection, where $G^{ab}=G/[G,G]$ and $[G,G]$ is the commutator subgroup. For a subset $X=\{x_1,\ldots, x_k\} \subseteq G$, we write $\sum X$ to denote the projection $\pi(x_1x_2,\ldots x_k) \in G^{ab}$ (which is independent of the order of the $x_i$'s). For convenience, instead of writing $\pi(a)+\pi(b)$ or $\pi(a)-\pi(b)$, we simply write $a+b$ and $a-b$. 

The results in this section are basic corollaries of the following theorem, which is restated here for convenience. 

\begin{theorem}\label{mpt}
{\rm \cite[Thm.\ 6.9]{MP}} Let $G$ be a sufficiently large group of order $n$. Let $V,C \subseteq G$ and $x,y \in G$ such that $|V|+1=|C| \geq n-\sqrt{n}$, $x\neq y$, and further suppose that $1_G \notin C$ if $G$ is an elementary abelian $2$-group. 

\begin{itemize}
\item[i)] If $\sum C=y-x$, then there exists a rainbow Hamilton sequence from $x$ to $y$ in $K^-[\{x,y\} \cup V;C]$.

\item[ii)] If $\sum C=x+y+2 \sum V$, then there exists a rainbow Hamilton sequence from $x$ to $y$ in $K^+[\{x,y\} \cup V;C]$.
\end{itemize}
\end{theorem}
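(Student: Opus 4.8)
The plan is to read both statements as instances of a single \emph{rainbow Hamilton path} problem and to attack them by the absorption method. For part (i), form the complete digraph $D$ on the vertex set $A=\{x,y\}\cup V$ in which the arc $u\to v$ carries the colour $u^{-1}v\in G$. A sequence $g_1,\dots,g_m$ realising $K^-[A;C]$ is then precisely a directed Hamilton path from $x$ to $y$ whose $m-1$ arc-colours are distinct and together form the set $C$. Because $|A|=m=|C|+1\ge n-\sqrt n+1$, this is a \emph{near-spanning} rainbow path using almost all vertices and almost all colours. Telescoping the colours and projecting the product $g_1^{-1}g_m$ to $G^{ab}$ gives
\[
\sum C \;=\; -x+y \;=\; y-x,
\]
so the hypothesis $\sum C=y-x$ is exactly the unavoidable abelianization obstruction, and the real content is that for large $G$ it is also sufficient. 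Part (ii) runs through the same machinery on the digraph coloured by $uv$, whose telescoped invariant is $x+y+2\sum V$, matching the stated hypothesis.

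The core of the argument is Montgomery-style distributive absorption. First I would set aside, at random, a small reservoir of vertices $R\subseteq V$ and of colours $C'\subseteq C$, and build from them an \emph{absorbing path} $P_{\mathrm{abs}}$ assembled out of many short gadgets, each capable of swallowing one leftover vertex together with one leftover colour. The gadgets are connected along a robustly expanding template so that, for \emph{any} admissible small remainder, $P_{\mathrm{abs}}$ can be rewired into a rainbow path covering its own vertices and the remainder. The pseudorandomness needed here comes essentially for free from the group structure: the arcs of any fixed colour $c$ form a partial permutation of $A$ (namely $u\mapsto uc$), so the colouring is proper and every colour class is near-perfect, which is what drives the random-greedy and concentration estimates.

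After reserving the absorber I would cover almost everything by a semi-random (R\"odl nibble / random greedy) process, extending a rainbow path that misses only $o(\sqrt n)$ of the remaining vertices and colours, while maintaining at each step that the partial abelianization sum stays on target; this guarantees that the uncovered remainder satisfies the balance condition the absorber was designed for. Finally the absorber is deployed to engulf the remainder, yielding a rainbow Hamilton path from $x$ to $y$ with colour set exactly $C$.

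The step I expect to be the main obstacle is reconciling the absorption with the \emph{exact} abelianization constraint. Unlike a purely extremal target, $\sum C=y-x$ must be met on the nose, so the gadgets must be flexible enough to correct simultaneously which vertices and colours are left over \emph{and} the precise value of the leftover sum in $G^{ab}$. Engineering gadgets with this algebraic degree of freedom, while respecting the tight density $|C|\ge n-\sqrt n$ (so the total error budget is only $O(\sqrt n)$) and handling the elementary-abelian-$2$-group degeneracy that forces the extra hypothesis $1_G\notin C$, is precisely what requires $G$ to be sufficiently large and what carries the bulk of the probabilistic work.
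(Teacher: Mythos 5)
This statement is not proved in the paper at all: it is Theorem~6.9 of M\"{u}esser and Pokrovskiy \cite{MP}, imported wholesale and restated only for convenience, and every result in Section~3 and Section~\ref{4} is derived as a corollary of it. So there is no internal proof to compare your attempt against; the only meaningful question is whether your proposal would constitute an independent proof of the cited result.

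It would not, although it points in the right direction. Your preliminary observations are correct: the telescoping computations showing that $\sum C=y-x$ (for $K^-$) and $\sum C=x+y+2\sum V$ (for $K^+$) are necessary conditions in $G^{ab}$ are accurate, and distributive absorption combined with a semi-random covering process is indeed the kind of machinery \cite{MP} uses. But everything past that point in your writeup is a plan, not an argument. The absorbing gadgets are never constructed; no lemma is stated, let alone proved, about the ``robustly expanding template''; the claim that the nibble can be steered so that ``the partial abelianization sum stays on target'' is asserted without any mechanism; and you explicitly flag the central difficulty---hitting the constraint in $G^{ab}$ exactly, with an error budget of only $O(\sqrt n)$ vertices and colours, while also handling the elementary abelian $2$-group degeneracy that forces the hypothesis $1_G\notin C$---as an open obstacle rather than resolving it. That difficulty is precisely where the content of the theorem lives: in \cite{MP} it requires a lengthy iterative/distributive absorption argument engineered around the group structure, and it is the reason the result was only recently established. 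As a blind proof attempt, then, your proposal identifies the correct necessary conditions and the correct family of techniques, but the entire technical core is missing, so it has a genuine gap and cannot be accepted as a proof.
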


The following corollary supports Bailey's conjecture \cite{B} that all finite groups, except the elementary 2-groups of order at least 4, are 2-sequenceable.

 \begin{cor}\label{2s}
All sufficiently large groups, except elementary $2$-groups, are $2$-sequenceable.
\end{cor}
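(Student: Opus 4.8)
The plan is to realize a $2$-sequencing as a rainbow Hamilton sequence produced by Theorem \ref{mpt}(i), supplemented by a single appended vertex that creates the one ``doubled'' difference needed to meet the inverse-pair count. In the $K^-$ setting the relevant differences are the left quotients $\bar g_i=g_{i-1}^{-1}g_i$, which is exactly the difference appearing in the definition of $2$-sequenceability. A $2$-sequencing requires that, among these differences, every involution occurs once and every inverse pair $\{g,g^{-1}\}$ of non-involutions occurs twice in total. A plain sequencing (difference set $G\setminus\{1_G\}$) already achieves this, but admits the abelianization obstruction $\sum(G\setminus\{1_G\})=y-x$ with $x\neq y$, which fails precisely for non-binary abelian groups; so for the general case I will instead use a difference \emph{multiset} in which one pair is represented as $\{a,a\}$ rather than $\{a,a^{-1}\}$.

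Concretely, fix any non-involution $a\in G$; this is the only step that uses the hypothesis that $G$ is not an elementary $2$-group, since such groups have no non-involutions. Set $C=G\setminus\{1_G,a^{-1}\}$, so that $|C|=n-2\ge n-\sqrt n$ for large $n$, put $w=ya$ for an endpoint $y$ to be chosen, and let $A=G\setminus\{w\}$ with $V=A\setminus\{x,y\}$ (so $|V|+1=|C|$ automatically). Applying Theorem \ref{mpt}(i) to $K^-[A;C]$ yields a rainbow Hamilton sequence $x=g_1,\dots,g_{n-1}=y$ whose consecutive differences are exactly the elements of $C$, each once. Appending $w$ gives $g_1,\dots,g_{n-1},w$ on all of $G$, with one extra difference $g_{n-1}^{-1}w=y^{-1}w=a$. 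Its difference multiset is thus $C$ together with a second copy of $a$: the pair $\{a,a^{-1}\}$ occurs twice (as $a$ twice), every other non-involution pair $\{b,b^{-1}\}$ occurs twice (both $b,b^{-1}\in C$), every involution occurs once, and $1_G$ never occurs. This is precisely a $2$-sequencing, and after left-translating by $g_1^{-1}$ we may assume it begins at $1_G$.

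It remains to choose $x,y$ so that Theorem \ref{mpt}(i) applies, i.e. $x\neq y$, $x,y\in A$, and $\sum C=y-x$ in $G^{ab}$. A short computation gives $\sum C=\sum_{g\in G}g-1_G-a^{-1}=\sigma+a$, where $\sigma=\sum_{g\in G}g$. Since $\sigma$ has order at most $2$ in $G^{ab}$ (it is $0$ or the unique involution of $G^{ab}$) while $a$ is a non-involution, one checks $\sigma+a\neq0$ whenever $G$ is abelian, so a pair $x\neq y$ with $y-x=\sigma+a$ exists; when $G$ is non-abelian and $\sum C$ vanishes, one instead takes $x\neq y$ in a common coset of $[G,G]$. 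The remaining constraints $w\notin\{x,y\}$ and $1_G\notin C$ are routine, using only $a\neq1_G$ and $2a\neq\sigma$, which is arrangeable for large groups.

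The main obstacle is book-keeping rather than depth: one must select the non-involution $a$ and the endpoints $x,y$ so that all hypotheses of Theorem \ref{mpt}(i) hold simultaneously — the size bound $|C|\ge n-\sqrt n$, the abelianization equation $\sum C=y-x$, and the distinctness and membership constraints on $x,y,w$ — and then verify that appending the single vertex $w$ converts the rainbow difference \emph{set} $C$ into the correct $2$-sequencing \emph{multiset}. The role of the excluded class is then transparent: the construction breaks down exactly when $G$ has no non-involution, that is, when $G$ is an elementary $2$-group.
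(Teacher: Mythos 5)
Your proof is correct and follows essentially the same route as the paper's: both apply Theorem \ref{mpt}(i) with the deficient difference set $C=G\setminus\{1_G,a^{-1}\}$ for a non-involution $a$, obtain a rainbow Hamilton path on $n-1$ vertices whose consecutive quotients realize $C$ exactly once each, and then attach a single extra vertex so that the quotient $a$ occurs twice while $a^{-1}$ never occurs --- which is precisely a $2$-sequencing. The paper takes the path on $G\setminus\{1_G\}$ from $x=g$ to $y=g^2$ and \emph{prepends} $1_G$ (the extra quotient is then $g$ and the sequence starts at the identity for free); you take the path on $G\setminus\{ya\}$, \emph{append} $w=ya$, and left-translate. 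That difference is cosmetic. Where your write-up genuinely improves on the paper's is the abelianization hypothesis: the paper verifies it by writing $\sum C=\sum G-(-g)=g=y-x$, which tacitly assumes $\sum G=0$ in $G^{ab}$. As the paper itself recalls in Section \ref{4}, $\sum G$ need not vanish: it is nonzero whenever $G^{ab}$ is binary and $[G,G]$ has odd order (binary abelian groups, dihedral groups of order $2m$ with $m$ odd, dicyclic groups $Q_{4m}$ with $m$ odd), and for such groups the paper's fixed choice $x=g$, $y=g^2$ does not satisfy $\sum C=y-x$. By leaving the endpoints free and solving $y-x=\sigma+a$ --- using that $\sigma$ has order at most $2$ in $G^{ab}$ while $a$ does not, together with the coset-of-$[G,G]$ trick when $\sum C=0$ in the non-abelian case --- your argument covers exactly the cases the paper's computation glosses over. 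The constraints you defer as routine ($w\notin\{x,y\}$, $2a\neq\sigma$, $1_G\notin C$) are indeed easily arranged in large groups, so the proof is complete modulo bookkeeping you have correctly identified.
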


\begin{proof}
Since $G$ is not an elementary 2-group, there exists $g\in G$ such that $g^2 \neq 1_G$. Let $x=g$, $y=g^2$, $V=G \setminus \{1_G,g,g^2\}$, $C=G \setminus \{1_G,g^{-1}\}$. One has
$$\sum C=\sum G-(-g)=g=y-x.$$
By Theorem \ref{mpt}, there exists a rainbow Hamilton sequence $g_1,g_2,\ldots, g_k$ from $g_1=x=g$ to $g_k=y=g^2$ in $K^-[\{x,y\} \cup V;C]$ which yields the 2-sequencing $1_G=g_0,g_1,g_2,\ldots, g_k$ in $G$. To see this, we note that $g_0,g_1,\ldots, g_k$ is a permutation of $G$, and moreover, each element of $G\setminus \{g^{-1}\}$ appears exactly once in the sequence of consecutive quotients $g_0,g_0^{-1}g_1,\ldots, g_{k-1}^{-1}g_{k}$ except for $g$ which appears twice (while $g^{-1}$ does not appear).\end{proof}

The following corollary supports Anderson's conjecture \cite{Anderson} that all binary groups, except the quaternion group, are symmetric sequenceable.

\begin{cor}\label{ss}
All sufficiently large binary groups are symmetric sequenceable. 
\end{cor}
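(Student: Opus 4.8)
The plan is to exploit the fact that, $G$ being binary, its unique involution $\imath_G$ is central, so $Z=\{1_G,\imath_G\}$ is a central subgroup of order $2$ and $\bar G:=G/Z$ has order $\ell=n/2$, where $n=|G|$. First I would record what the symmetric condition means for partial products. Write $a_i$ for the partial products and $b_i$ for the sequencing terms, so that $b_1=1_G$ and $b_i=a_{i-1}^{-1}a_i$ for $i\ge 2$. Since $n$ is even, the central index $i=\ell+1$ satisfies $\bar g_{\ell+1}\bar g_{n+2-(\ell+1)}=b_{\ell+1}^2=1_G$, forcing $b_{\ell+1}=\imath_G$; and using centrality of $\imath_G$ a short computation turns the relation $\bar g_i\bar g_{n+2-i}=1_G$ into $a_{n+1-i}=\imath_G\,a_i$ for all $i$. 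Thus a symmetric sequencing is equivalent to choosing a first half $a_1=1_G,a_2,\dots,a_\ell$ such that (a) $\{a_1,\dots,a_\ell\}$ is a transversal of the $Z$-cosets, and (b) the quotients $b_i=a_{i-1}^{-1}a_i$ for $2\le i\le\ell$ form a transversal of the inverse-pairs of $G\setminus Z$ (one element from each pair $\{g,g^{-1}\}$); the second half is then defined by $a_{n+1-i}=\imath_G a_i$ and yields a valid symmetric sequencing automatically.

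Next I would reduce (a)+(b) to a sequencing of $\bar G$ followed by a lift. If $\bar a_1=1,\dots,\bar a_\ell$ is a sequencing of $\bar G$, its internal quotients $\bar d_i=\bar a_{i-1}^{-1}\bar a_i$ exhaust $\bar G\setminus\{1\}$. Fix a lift $\beta_i\in G$ of each $\bar d_i$ and seek the quotients in the form $b_i=\beta_i\,\imath_G^{\eta_i}$ with $\eta_i\in\{0,1\}$, setting $a_1=1_G$ and $a_i=b_2\cdots b_i$. Because $\bar a_i=\bar d_2\cdots\bar d_i$ is a permutation of $\bar G$, condition (a) holds for every choice of the $\eta_i$. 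For (b) I would use the coset/pair bookkeeping: a quotient lying over an involution of $\bar G$ automatically has order $4$ (its square lies in $Z$ and $G$ is binary), so its $Z$-coset equals its own inverse-pair and such positions are unconstrained; a pair of positions $i,j$ with $\bar d_j=\bar d_i^{-1}$ covers the two inverse-pairs over $\{\bar d_i,\bar d_i^{-1}\}$ correctly precisely when $b_ib_j=\imath_G$, i.e. when $\eta_i+\eta_j$ has a prescribed parity. The key point is that these constraints involve pairwise disjoint pairs of the free bits $\eta_i$ (the positions are matched up), so they decouple into independent two-variable equations over $\mathbb F_2$ and can be satisfied simultaneously. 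Hence every sequencing of $\bar G$ lifts to a symmetric sequencing of $G$.

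It remains to produce a sequencing of $\bar G$. Since $|\bar G|=n/2$ is again large, I would obtain one from Theorem \ref{mpt}(i) applied to $\bar G$, exactly as in Corollary \ref{2s}: take $\bar x=1$, $\bar C=\bar G\setminus\{1\}$ (so $1\notin\bar C$, disposing of the elementary-abelian caveat), the remaining vertices as $\bar V$, and choose $\bar y$ with $\sum\bar C=\bar y-\bar x$ in $\bar G^{ab}$. This succeeds provided such a $\bar y\ne\bar x$ exists, which holds exactly when $\bar G$ is non-abelian or abelian binary.

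The main obstacle is precisely this last step. The required $\bar y\ne\bar x$ fails to exist exactly when $\bar G=G/Z$ is abelian and non-binary, i.e. when $[G,G]\le Z$; but then $G$ is metabelian, hence solvable, so a large solvable binary group is symmetric sequenceable by Anderson \cite{AI2} and these residual cases are already covered. I therefore expect the argument to split cleanly: when $G/Z$ is non-abelian or abelian binary, build a sequencing of $\bar G$ and lift it as above; in the complementary solvable case, invoke \cite{AI2}. The steps I would be most careful to verify are the translation (a)+(b) via centrality of $\imath_G$ and the claim that the lifting constraints genuinely decouple; once those are in place, the probabilistic input from Theorem \ref{mpt} enters only through the half-size group $\bar G$, where the size hypothesis $|\bar C|\ge\ell-\sqrt{\ell}$ is comfortably met.
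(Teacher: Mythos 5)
Your proposal is correct, and while it shares the paper's overall skeleton (reduce to the quotient $\bar G = G/Z$, $Z=\{1_G,\imath_G\}$, and feed the quotient to the large-group machinery), its key lemma is genuinely different. The paper simply cites Anderson's equivalence \cite{Anderson3} — a binary group $G$ is symmetric sequenceable if and only if $G/Z$ is $2$-sequenceable — and then argues that $G/Z$ is sequenceable (hence $2$-sequenceable) via a case analysis on whether the Sylow $2$-subgroup of $G$ is cyclic or generalized quaternion. You instead prove the reduction yourself: your conditions (a)+(b), the forced value $b_{\ell+1}=\imath_G$, and the parity-bit lifting are a correct, self-contained proof of the implication ``$\bar G$ sequenceable $\Rightarrow$ $G$ symmetric sequenceable'' (the verification checks out: positions over involutions of $\bar G$ are unconstrained since such lifts have order $4$, and the remaining constraints pair off into disjoint equations $\eta_i+\eta_j=\epsilon_{ij}$ over $\mathbb{F}_2$, hence are simultaneously satisfiable). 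The trade-off is that your lifting accepts only genuine sequencings, not $2$-sequencings, which is exactly why you must split off the case $[G,G]\le Z$ with $G/Z$ non-binary; your fallback there (such $G$ is nilpotent of class at most $2$, hence solvable, binary, and large, so \cite{AI2} applies) is legitimate and closes the gap. By contrast, the paper's route through $2$-sequencings is more flexible at the quotient level — odd-order and non-binary abelian quotients are $2$-sequenceable though never sequenceable — and is shorter, at the cost of leaning on \cite{Anderson3} and on the Sylow classification of binary groups, both of which your argument avoids entirely while confining the probabilistic input of Theorem \ref{mpt} to the half-size quotient, where the hypothesis $|\bar C|\ge \ell-\sqrt{\ell}$ is trivially met.
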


\begin{proof}
Let $H$ be a Sylow 2-subgroup of $G$. Then $H$ is either isomorphic to the cyclic group $\mathbb{Z}_{2^m}$, $m\geq 1$, or isomorphic to the dicyclic group $Q_{2^m}$, $m\geq 3$. 

A binary group $G$ is symmetric sequenceable if and only if $G/\mathbb{Z}_2$ is 2-sequenceable \cite{Anderson3}, where $\mathbb{Z}_2$ is the normal subgroup of $G$ generated by the unique element of order 2. There are two cases:

Case 1. $H$ is isomorphic to $\mathbb{Z}_{2^m}$ for some $m\geq 1$. In this case, the Sylow 2-subgroup of $G/\mathbb{Z}_2$ is isomorphic to $\mathbb{Z}_{2^{m-1}}$. Since $G/\mathbb{Z}_2$ is abelian and its Sylow 2-subgroup is nontrivial and cyclic ($m>1$ for $G$ large enough), then $G/\mathbb{Z}_2$ is sequenceable, hence 2-sequenceable. 

Case 2. $H$ is isomorphic to $Q_{2^m}$, $m\geq 3$. In this case $G/\mathbb{Z}_2$ is a dihedral group and non-abelian for $m$ large enough, hence sequenceable. 

In both cases $G/\mathbb{Z}_2$ is 2-sequenceable. It follows that $G$ is symmetric sequenceable.
\end{proof}

Next, we discuss supersequenceable groups. Let $G$ be a finite group and $[G,G]$ be the commutator subgroup of $G$. Let $g_1,\ldots, g_n$ be an ordering of $G$ and $p=g_1g_2\cdots g_n$ and consider the coset $P(G)=p[G,G]$, which is independent of the choice of the original ordering. 

\begin{definition}A finite group $G$ is {\it supersequenceable} if for every $h\in p[G,G]$ there exists a sequence $g_1,\ldots, g_m$ such that $g=g_m$ and either ${\bf \bar g}$ is a sequencing or ${\bf \check g}$ is an R-sequencing in $G$. 
\end{definition} 

Supersequenceable groups include all abelian groups \cite{AKP,K2}, dihedral groups of order $4n+2\geq 10$ \cite{K2}, and dihedral groups of order $4q$, where $q$ is an odd prime. 

\begin{cor}\label{superseq}
All sufficiently large groups are supersequenceable. 
\end{cor}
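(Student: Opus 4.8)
The plan is to realize, for each target $h$ in the coset $P(G)=p[G,G]$, an appropriate sequence directly from Theorem~\ref{mpt}(i). The key preliminary observation is that $h\in P(G)$ if and only if $\pi(h)=\sum G$ in $G^{ab}$, so the hypothesis ``$\sum C=y-x$'' of Theorem~\ref{mpt}(i) is essentially a statement about which coset the endpoints lie in. I would then split into two regimes according to whether the requested $h$ is the identity: a \emph{sequencing} handles the generic case, and an \emph{R-sequencing} handles $h=1_G$. In both cases the underlying object is a rainbow Hamilton sequence in some $K^-[A;C]$.

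For $h\neq 1_G$ I would take $x=1_G$, $y=h$, $C=G\setminus\{1_G\}$, and $V=G\setminus\{1_G,h\}$. Then $\sum C=\sum G=\pi(h)=y-x$, the size condition $|V|+1=|C|=n-1\ge n-\sqrt{n}$ holds, $x\neq y$, and $1_G\notin C$ (which keeps the elementary abelian $2$-group clause of Theorem~\ref{mpt} satisfied). The resulting rainbow Hamilton sequence $1_G=g_1,\dots,g_n=h$ in $K^-[G;G\setminus\{1_G\}]$ is precisely a sequencing: $\bar{\mathbf g}$ begins with $g_1=1_G$, its remaining entries are the consecutive quotients, which exhaust $G\setminus\{1_G\}$, and its terminal partial product $g_n$ equals $h$. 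This disposes of every non-identity element of $P(G)$ simultaneously.

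The case $h=1_G$ occurs exactly when $1_G\in P(G)$, i.e.\ when $\sum G=0$ in $G^{ab}$; by the standard fact that the product of all elements of $G$ lies in $[G,G]$ precisely when the Sylow $2$-subgroup is trivial or noncyclic, this is the Hall--Paige regime. Since a sequencing cannot terminate at $1_G$, I would instead produce an R-sequencing, whose classical terminal partial product is automatically $1_G$. Choosing distinct $x,y\in G\setminus\{1_G\}$ and setting $C=(G\setminus\{1_G\})\setminus\{y^{-1}x\}$ and $V=(G\setminus\{1_G\})\setminus\{x,y\}$, one computes $\sum C=\sum G-(x-y)=y-x$, the last equality using $\sum G=0$; the size and elementary-abelian conditions again hold. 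Theorem~\ref{mpt}(i) then yields a rainbow Hamilton sequence $x=g_1,\dots,g_{n-1}=y$ in $K^-$, and adjoining the wrap-around quotient $g_{n-1}^{-1}g_1=y^{-1}x$ makes $\check{\mathbf g}$ run over $G\setminus\{1_G\}$, i.e.\ an R-sequencing.

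I expect the main obstacle to be conceptual rather than computational: one must correctly match the terminal value contributed by each type of sequence to the target $h$ (sequencings reach the non-identity elements of $P(G)$, while R-sequencings account for $1_G$), and then observe that the arithmetic hypothesis $\sum C=y-x$ needed for the R-sequencing holds \emph{exactly} when $\sum G=0$, which is precisely the condition under which $1_G$ belongs to $P(G)$ in the first place. Thus the single target that forces a different construction is also the only target for which that construction is available, and the two facts dovetail. The remaining work is bookkeeping: keeping $1_G\notin C$ throughout so the elementary abelian $2$-group clause of Theorem~\ref{mpt} never obstructs the argument, and confirming $n-1\ge n-\sqrt{n}$ so the density hypothesis is met for all sufficiently large $G$.
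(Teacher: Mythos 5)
Your proposal is correct, and for $h\neq 1_G$ it coincides exactly with the paper's own proof: take $x=1_G$, $y=h$, $V=G\setminus\{1_G,h\}$, $C=G\setminus\{1_G\}$, observe $\sum C=\sum G=y-x$ (using $h\in P(G)$), and read the sequencing off the rainbow Hamilton sequence in $K^-$. Where you go beyond the paper is in noticing that Theorem \ref{mpt} requires $x\neq y$, so this construction produces nothing when $h=1_G$ --- a case that arises precisely when $\sum G=0$ (the Hall--Paige regime) and which the paper's one-case proof silently skips. Your second branch fills that gap correctly and self-containedly: with distinct $x,y\in G\setminus\{1_G\}$, $V=(G\setminus\{1_G\})\setminus\{x,y\}$ and $C=(G\setminus\{1_G\})\setminus\{y^{-1}x\}$, the hypotheses of Theorem \ref{mpt}(i) all check out ($|V|+1=|C|=n-2\geq n-\sqrt{n}$, $1_G\notin C$, and $\sum C=y-x$ exactly because $\sum G=0$), and adjoining the wrap-around quotient $g_{n-1}^{-1}g_1=y^{-1}x$ turns the rainbow Hamilton sequence into an R-sequencing, which under the natural reading of the definition (the terminal partial product of $\check{\bf g}$ is $1_G$) is exactly what accounts for the target $h=1_G$. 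The paper could alternatively have covered this case by citing M\"{u}esser--Pokrovskiy's result that large Hall--Paige groups are R-sequenceable, but your explicit construction dovetails the two cases cleanly; in short, your write-up is not merely correct but strictly more complete than the proof in the paper.
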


\begin{proof}
Let $h\in P(G)$. Apply Theorem \ref{mpt} with $x=1_G$, $y=h$, $V=G \setminus \{1_G,h\}$, and $C=G\setminus \{1_G\}$. One verifies that
$$\sum C=\sum G=h=y-x,$$
which implies that there exists a rainbow Hamilton sequence $g_1,g_2,\ldots, g_k$ form $g_1=1_G$ to $g_k=h$ and the consecutive quotients $g_1, g_1^{-1}g_2,\ldots, g_{k-1}^{-1}g_k$ form a sequencing in $G$. Thus, $G$ is supersequenceable. 
\end{proof}
\section{Partial sequencings and double sequencings}\label{4}
Let ${\bf w}: w_1,\ldots, w_k$ be a sequence of distinct elements in $W \subseteq G$. We say ${\bf w}$ is a partial harmonious sequence in $W$ if the (linear) consecutive products $w_1w_2,\ldots, w_{k-1}w_k$ are all distinct. If there exist a sequence $w_{k+1},\ldots, w_m$ in $W$ such that the extended sequence $w_1,\ldots, w_m$ is a harmonious sequence in $W$, then we say that ${\bf w}$ is extendable in $W$. 

Similarly, if $w_1,\ldots, w_k$ is a sequence of distinct elements in $W\setminus \{1_G\}$, we say ${\bf \bar w}$ is a partial R-sequencing in $W$ if the (linear) consecutive quotients $w_1^{-1}w_2,\ldots, w_{k-1}^{-1}w_k$ are all distinct elements of $W\setminus \{1_G\}$. Finally, if there exist $w_{k+1},\ldots, w_m$ in $W$ such that $w_m^{-1}w_1,w_1^{-1}w_2,\ldots, w_{m-1}^{-1}w_m$ is an R-sequencing in $W$, we say that ${\bf \bar w}$ is extendable in $W$. 

When is it possible to extend a partial harmonious sequence or a partial R-sequencing in a group $G$ to a full harmonious sequence in $G$? One expects that if the length of the partial sequence is small enough (relative to the size of the group) and if the group satisfies the Hall-Paige condition, then the sequence is extendable. We first give an example of a non-extendable partial sequence of length $n+2$ in a group of order $4n$, where $n$ is arbitrarily large. 

\begin{example}
Let $H$ be a harmonious group of even order $n$ and let $G=H \times \mathbb{Z}_4$. Any harmonious sequence $g_1,\ldots, g_{4n}$ in $G$ is partitioned into blocks $B_0,\ldots, B_{k-1}$, where within each block $B_i$, the second components equal $i$ modulo 2. Since only the transition from one block to the next gives rise to an adjacent sum with an odd second component and there are $2n$ elements in $G$ with an odd second component, it follows that $k=n$. Moreover, $\sum_{i~even} |B_i|=2n$ and so $|B_0|+n-1\leq 2n$ which yields $|B_0|\leq n+1$. Let $h_1,\ldots, h_n$ be a harmonious sequence in $H$. Then
$$(h_1,0),(h_2,0),\ldots, (h_n,0), (h_1,2),(h_2,0),$$
is a partial harmonious sequence of length $n+2$ in $G$ and it would fit in $B_0$ (since all of the second components are even), hence it cannot be extended to a harmonious sequence in $G$. 

The same example works for the R-sequencing case, where one uses an R-sequencing $h_1,\ldots, h_n$ for $H$.
\end{example}

The next lemma shows that partial harmonious sequences and partial R-sequencings shorter than $\sqrt{|G|}$ are extendable in a sufficiently large group $G$ satisfying the Hall-Paige condition. 

\begin{lemma}\label{ext}
Let $G$ be a sufficiently large group of order $n$ and $W \subseteq G$ such that $\sum W=0$. Furthermore, suppose that $1_G \notin W$ if $G$ is an elementary abelian $2$-group. Then every partial harmonious sequence (partial R-sequencing) of length $k\leq |W|-n+\sqrt{n}$ is extendable to a harmonious sequence (R-sequencing) in $W$. 
\end{lemma}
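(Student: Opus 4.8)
The plan is to reduce the extension problem to a single application of Theorem~\ref{mpt}, routing the rainbow Hamilton sequence so that it \emph{begins} where the partial sequence ends and \emph{ends} where the partial sequence begins; the lone ``wrap-around'' product (respectively quotient) of the resulting cyclic sequence is then automatically supplied as the last term the theorem produces. Write $M=|W|$ and let $w_1,\ldots,w_k$ be the given partial harmonious sequence, with product set $P=\{w_1w_2,\ldots,w_{k-1}w_k\}\subseteq W$, so $|P|=k-1$. I would take $x=w_k$, $y=w_1$, $V=W\setminus\{w_1,\ldots,w_k\}$, and $C=W\setminus P$. Then $|V|=M-k$ and $|C|=M-(k-1)=|V|+1$, while the hypothesis $k\le|W|-n+\sqrt n$ gives $|C|=|W|-k+1\ge n-\sqrt n$; since $C\subseteq W$ we also have $1_G\notin C$ whenever $1_G\notin W$. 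Thus all cardinality hypotheses of Theorem~\ref{mpt} are met.

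The only genuine computation is the sum condition for part~(ii). Expanding in $G^{ab}$, $\sum P=\sum_{i=1}^{k-1}(w_i+w_{i+1})=w_1+w_k+2(w_2+\cdots+w_{k-1})$, while $x+y+2\sum V=w_1+w_k+2(w_{k+1}+\cdots+w_M)$. Adding these two expressions places coefficient $2$ on every element of $W$, so $\sum P+(x+y+2\sum V)=2\sum W=0$; since $\sum C=\sum W-\sum P=-\sum P$, this yields exactly $\sum C=x+y+2\sum V$. Theorem~\ref{mpt}(ii) then produces a rainbow Hamilton sequence $w_k=g_1,g_2,\ldots,g_{m'}=w_1$ in $K^+[\{x,y\}\cup V;\,C]$ through the unused elements, which I relabel $w_{k+1}=g_2,\ldots,w_M=g_{m'-1}$. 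Gluing this to the partial sequence along the shared endpoints $w_k$ and $w_1$ gives the cyclic ordering $w_1,\ldots,w_M$ whose cyclic products are $P\cup C=W$, each exactly once; this is the desired harmonious extension, with the wrap-around product $w_Mw_1$ appearing as the final term $g_{m'-1}g_{m'}$ of the applied theorem.

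For a partial R-sequencing I would run the identical argument on the quotient structure $K^-$ using Theorem~\ref{mpt}(i), setting $x=w_k$, $y=w_1$, $V=(W\setminus\{1_G\})\setminus\{w_1,\ldots,w_k\}$, and $C=(W\setminus\{1_G\})\setminus Q$ with $Q=\{w_1^{-1}w_2,\ldots,w_{k-1}^{-1}w_k\}$. The sum verification is now even cleaner: the quotients telescope to $\sum Q=w_k-w_1$, and since $\pi(1_G)=0$ gives $\sum(W\setminus\{1_G\})=\sum W=0$, we obtain $\sum C=-\sum Q=w_1-w_k=y-x$, matching the hypothesis of part~(i). The resulting sequence again glues to the partial R-sequencing to realize every cyclic quotient exactly once.

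The steps needing the most care are bookkeeping rather than depth, since Theorem~\ref{mpt} does the real work. First, I must ensure the products (quotients) of the partial sequence genuinely lie in $W$ (respectively $W\setminus\{1_G\}$), as otherwise $|C|$ is miscounted; this is implicit in the phrase ``partial harmonious sequence in $W$'' and should be recorded explicitly. Second, the application requires $x\ne y$, i.e.\ $w_k\ne w_1$, which holds as soon as $k\ge 2$; the degenerate cases $k\le 1$ impose no product constraint and must be dispatched separately, for instance by choosing an arbitrary endpoint $y\ne w_1$ with $yw_1\in W$ (such $y$ exists because $W$ and $Ww_1$ each have size $\ge n-\sqrt n$ and so intersect) and taking $C=W\setminus\{yw_1\}$, so that the single missing cyclic product is once more absorbed as the wrap-around term. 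The main obstacle is thus purely organizational: arranging the endpoints so the lone cyclic wrap-around term is accounted for by the theorem's output. Once $x,y,V,C$ are chosen as above this is automatic, and both sum conditions collapse to the hypothesis $\sum W=0$.
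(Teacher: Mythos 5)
Your proposal is correct and takes essentially the same route as the paper's proof: the identical choices $x=w_k$, $y=w_1$, $V=W\setminus\{w_1,\ldots,w_k\}$, and $C=W$ minus the set of partial products (quotients), the same verification that the hypothesis of Theorem~\ref{mpt}(ii) (resp.\ (i)) collapses to $\sum W=0$, and the same gluing of the resulting rainbow Hamilton sequence onto the partial sequence with the wrap-around term absorbed as the final product. Your extra care with the degenerate cases $k\le 1$ (where the theorem's requirement $x\neq y$ fails) and with making explicit that the partial products must lie in $W$ goes slightly beyond the paper's write-up, which implicitly assumes $k\ge 2$ and $P\subseteq W$.
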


\begin{proof} 
Let $w_1,\ldots, w_k$ be a partial harmonious sequence in $W$. 
Let $x=w_k$, $y=w_1$, $V=W \setminus\{w_1,\ldots, w_k\}$, and $C=W \setminus \{w_1w_2,\ldots, w_{k-1}w_k\}$. One has $|V|+1=|C|=|W|-k+1 \geq n-\sqrt{n}$ and 
\begin{align*}
\sum C & = \sum W +w_1+w_k- 2\sum_{i=1}^{k}w_i\\
 & =\sum W+w_1+w_k-2 \left (\sum W -\sum V \right )\\ 
& =x+y+2\sum V,
\end{align*}
since $\sum W=0$. By Theorem \ref{mpt}, there exists a rainbow Hamilton sequence $w_k,w_{k+1}, \ldots, w_m,w_{m+1}=w_1$, from $x=w_k$ to $y=w_1$ in $K_G^+[\{x,y\} \cup V; C]$. Then, $w_1,\ldots, w_m$ is a harmonious sequence in $W$. The proof for partial sequencings is similar. 
\end{proof}

The following corollary extends the corresponding abelian results \cite{DJ} to large groups. Recall that in a group $G$ satisfying the Hall-Paige condition, $\sum G=0$ and in a binary group $G$ where $\imath_G$ is the unique element of order 2, $\sum G=i_G$. 
\begin{cor}\label{bun}
Let $G$ be a sufficiently large group.
\begin{itemize}
\item[i)] If $G$ satisfies the Hall-Paige condition, then $G\backslash \{1_G\}$ is harmonious. 
\item[ii)] If $G$ is a binary group with a unique involution $\imath_G$, then $G\backslash \{\imath_G\}$ and $G\backslash \{1_G,\imath_G\}$ are harmonious. 
\end{itemize}
\end{cor}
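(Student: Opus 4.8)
The plan is to obtain all three harmoniousness statements as immediate applications of Lemma \ref{ext}, so that the only substantive work is checking its hypotheses for each of the three relevant subsets $W\in\{G\setminus\{1_G\},\,G\setminus\{\imath_G\},\,G\setminus\{1_G,\imath_G\}\}$. The guiding observation is that the removed elements are dictated by a necessary condition: for any harmonious sequence in $W$ the cyclic products cover $W$, and summing them in $G^{ab}$ gives $2\sum W=\sum W$, forcing $\sum W=0$. So the whole point is to delete exactly the elements that make $\sum W$ vanish, and then let Lemma \ref{ext} supply the sequence.

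First I would verify the sum condition using the two facts recorded just before the statement: $\sum G=0$ when $G$ satisfies the Hall--Paige condition, and $\sum G=\imath_G$ when $G$ is binary with unique involution $\imath_G$. Working in $G^{ab}$, where $1_G\mapsto 0$ and $2\imath_G=0$, this gives $\sum(G\setminus\{1_G\})=\sum G=0$ in part (i), and $\sum(G\setminus\{\imath_G\})=\imath_G-\imath_G=0$ as well as $\sum(G\setminus\{1_G,\imath_G\})=\imath_G-\imath_G=0$ in part (ii). Thus $\sum W=0$ in all three cases.

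Next I would check the remaining hypotheses of Lemma \ref{ext}. The three sets have sizes $n-1$, $n-1$, $n-2$, so the admissible length bound $|W|-n+\sqrt n$ is at least $\sqrt n-2$, which exceeds $2$ once $G$ is large enough; hence it suffices to feed the lemma a partial harmonious sequence of length $2$. Such a seed always exists, since one needs only distinct $w_1,w_2\in W$ with $w_1w_2\in W$, and only a bounded number of choices of $w_2$ are excluded while $|W|$ grows with $n$. The elementary abelian $2$-group caveat of Lemma \ref{ext} is harmless: in part (i) we have $1_G\notin G\setminus\{1_G\}$, so the condition holds even for such groups (which do satisfy Hall--Paige once their order is at least $4$); in part (ii) a sufficiently large binary group has only one involution and so is not an elementary abelian $2$-group, making the caveat vacuous. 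Applying Lemma \ref{ext} to the length-$2$ seed then yields a harmonious sequence in each $W$.

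The main obstacle is essentially nonexistent in terms of difficulty---the argument is bookkeeping of $\sum W$ together with the size and degeneracy checks---but one subtle point deserves attention. One cannot simply extend the empty or a length-$1$ partial sequence, because the proof of Lemma \ref{ext} passes to Theorem \ref{mpt} with endpoints $x=w_k$ and $y=w_1$, which must be distinct; this is precisely why the seed must have length at least $2$. Equivalently, one may bypass the lemma and invoke Theorem \ref{mpt}(ii) directly with distinct $x,y\in W$, $V=W\setminus\{x,y\}$ and $C=W\setminus\{yx\}$: here the hypothesis $\sum C=x+y+2\sum V$ collapses to exactly $\sum W=0$, and the wrap-around product $g_mg_1=yx$ completes the $m-1$ linear products to all of $W$, giving the harmonious sequence.
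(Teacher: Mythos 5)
Your proposal is correct and follows essentially the same route as the paper, which states Corollary \ref{bun} as an immediate consequence of Lemma \ref{ext} together with the recalled facts $\sum G=0$ (Hall--Paige case) and $\sum G=\imath_G$ (binary case), so that $\sum W=0$ for each of the three sets $W$. Your additional care about seeding the lemma with a length-two partial sequence (since the lemma's proof requires distinct endpoints $x\neq y$ in Theorem \ref{mpt}) is a legitimate refinement of a detail the paper leaves implicit, and your verification that such a seed exists and that the elementary abelian $2$-group caveat is harmless is accurate.
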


The next lemma gives necessary and sufficient conditions for a large enough subset of a sufficiently large group to be doubly harmonious and doubly R-sequenceable. 

\begin{lemma}\label{dhr}
Let $G$ be a sufficiently large group of order $n$ and $W\subseteq G$ with $|W| \geq  n - \sqrt{n/2}$. Furthermore, suppose that $G$ is not an elementary abelian $2$-group. Then $W$ is doubly harmonious (doubly R-sequenceable) if and only if $2\sum W=0$.\end{lemma}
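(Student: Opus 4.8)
The plan is to settle the two ``only if'' directions by an abelianization count and the two ``if'' directions by gluing together two rainbow Hamilton sequences supplied by Theorem~\ref{mpt}.

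\emph{Necessity.} First I would record the forced condition. If ${\bf g}\in{\cal P}_2(W)$ is doubly harmonious, then projecting to $G^{ab}$ and using $\hat g_i=g_{i-1}g_i$ cyclically gives
$$\sum_{i}\hat g_i=\sum_i\big(g_{i-1}+g_i\big)=2\sum_i g_i=4\sum W$$
(each element of $W$ occurs twice), while ${\bf\hat g}\in{\cal P}_2(W)$ forces $\sum_i\hat g_i=2\sum W$; hence $2\sum W=0$. For the doubly R-sequenceable case the cyclic quotients $\bar g_i=g_{i-1}^{-1}g_i$ telescope to $\sum_i\bar g_i=0$ in $G^{ab}$, whereas ${\bf\check g}\in{\cal P}_2(W\setminus\{1_G\})$ forces $\sum_i\bar g_i=2\sum W$; again $2\sum W=0$.

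\emph{Sufficiency --- set-up.} Assume $2\sum W=0$. The target is a cyclic sequence of length $2|W|$ covering $W$ twice whose cyclic products (resp.\ quotients) also cover $W$ (resp.\ $W\setminus\{1_G\}$) twice. I would realize it as two rainbow Hamilton sequences on the vertex set $W$ (resp.\ $W\setminus\{1_G\}$) glued end to end: a first path $P_1$ from $x_1$ to $y_1$ and a second path $P_2$ from $x_2$ to $y_2$, forming the cycle $x_1\to\cdots\to y_1\to x_2\to\cdots\to y_2\to x_1$. Writing $B_1,B_2$ for the product-sets internal to $P_1,P_2$ and $c_1=y_1x_2$, $c_2=y_2x_1$ for the two junction products, the $2|W|$ cyclic products are exactly $B_1\sqcup B_2\sqcup\{c_1,c_2\}$, and this must equal $W$ counted twice. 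Since $B_1,B_2$ are \emph{sets}, once $c_1\neq c_2$ this forces $B_1=W\setminus\{c_2\}$ and $B_2=W\setminus\{c_1\}$, so $\sum B_1=\sum W-c_2$ and $\sum B_2=\sum W-c_1$. Matching these against the hypothesis $\sum B_j=x_j+y_j+2\sum V_j$ of Theorem~\ref{mpt}(ii), with $V_j=W\setminus\{x_j,y_j\}$ and using $2\sum W=0$, reduces both requirements to the single abelianization condition $y_1=y_2+\sum W$, which is self-consistent precisely because $2\sum W=0$.

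\emph{Sufficiency --- execution.} It remains to choose $x_1,y_1,x_2,y_2\in W$ realizing $\pi(y_1)-\pi(y_2)=\sum W$ with $x_1\neq y_1$, $x_2\neq y_2$, and with $c_1=y_1x_2$, $c_2=y_2x_1$ distinct elements of $W$. A short counting argument does this: since $|G\setminus W|\le\sqrt{n/2}$, the pairs $(y_1,y_2)\in W\times W$ with $y_1y_2^{-1}\in\pi^{-1}(\sum W)$ number at least $|[G,G]|\,(n-\sqrt{2n})>0$, and thereafter $x_2\in W\cap y_1^{-1}W$ and $x_1\in W\cap Wy_2^{-1}$ may each be drawn from a set of size $\ge n-\sqrt{2n}$, excluding the $O(1)$ values forbidden by $x_1\ne y_1$, $x_2\ne y_2$, $c_1\neq c_2$. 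The size bound $|W|\ge n-\sqrt{n/2}$ guarantees $|B_j|=|W|-1\ge n-\sqrt n$, so the hypothesis $|V_j|+1=|C_j|\ge n-\sqrt n$ of Theorem~\ref{mpt} holds; the elementary-abelian-$2$-group proviso is vacuous here by assumption. Applying Theorem~\ref{mpt}(ii) to each of $P_1,P_2$ and gluing at $c_1,c_2$ yields ${\bf g}\in{\cal P}_2(W)$ with ${\bf\hat g}\in{\cal P}_2(W)$, i.e.\ $W$ is doubly harmonious. The doubly R-sequenceable case is identical with products replaced by quotients: one takes junctions $d_1=y_1^{-1}x_2$, $d_2=y_2^{-1}x_1$ inside $W\setminus\{1_G\}$, works on vertex set $W\setminus\{1_G\}$, and invokes Theorem~\ref{mpt}(i), whose condition $\sum C=y-x$ again collapses to $y_1=y_2+\sum W$.

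\emph{Main obstacle.} I expect the only genuine work to be the simultaneous selection of the four endpoints and two junctions meeting all membership, distinctness, and abelianization constraints; this is exactly where both the hypothesis $2\sum W=0$ (to make the two sum conditions of Theorem~\ref{mpt} compatible) and the density of $W$ (to dodge the $O(\sqrt n)$ forbidden choices) are indispensable. The gluing bookkeeping --- checking that $B_1\sqcup B_2\sqcup\{c_1,c_2\}$ covers $W$ exactly twice --- is routine, and the necessity direction is immediate.
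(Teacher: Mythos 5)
Your proof is correct, but it takes a genuinely different route from the paper's. For sufficiency, the paper applies Theorem~\ref{mpt} only \emph{once}, not in $G$ but in the doubled group $H=G\times\mathbb{Z}_2$: after finding (by the same counting argument you use) $v_1,v_2\in W$ with $v_2v_1\in W$, it takes $V=W\times\mathbb{Z}_2\setminus\{(v_1,0),(v_2,1)\}$ and $C=W\times\mathbb{Z}_2\setminus\{(v_2v_1,1-t)\}$, where $t$ is the parity of $|W|$, obtains a single rainbow Hamilton sequence from $(v_1,0)$ to $(v_2,1)$, and projects to $G$; the $\mathbb{Z}_2$-coordinate forces every element of $W$ to appear exactly twice among both the vertices and the consecutive products, so the doubling comes for free from the lift, and $2\sum W=0$ is used once to verify the sum condition in $H^{ab}$. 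You instead achieve the doubling by concatenation: two rainbow Hamilton paths inside $G$ itself on vertex set $W$ (resp.\ $W\setminus\{1_G\}$), glued at two junctions, with $2\sum W=0$ entering precisely to make the two sum conditions $y_1-y_2=\sum W$ and $y_2-y_1=\sum W$ simultaneously satisfiable. The trade-off: your route requires selecting four endpoints and two junctions subject to several membership and distinctness constraints (the paper needs only $v_2v_1\in W$ plus parity bookkeeping in the $\mathbb{Z}_2$ coordinate), but it leaves far more slack in the density hypothesis --- your applications of Theorem~\ref{mpt} only need $|W|-2\geq n-\sqrt{n}$, so your argument in fact proves the statement under a weaker lower bound on $|W|$, whereas the paper's lift consumes essentially all of $|W|\geq n-\sqrt{n/2}$, since there $|C|=2|W|-1$ must clear the threshold $2n-\sqrt{2n}$ in the group $H$ of order $2n$. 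One trivial slip to fix: since your junction is $c_2=y_2x_1$, the membership set for $x_1$ should be $W\cap y_2^{-1}W$, not $W\cap Wy_2^{-1}$; the counting is unaffected. Finally, you prove the (easy) necessity direction explicitly, which the paper leaves implicit despite the ``if and only if'' in the statement.
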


\begin{proof}

First, consider the partial harmonious case. We claim that there exist $v_1,v_2 \in W$ (not necessarily distinct) such that $v_2v_1 \in W$. Let $x\in W$ be arbitrary and note that $|W\cap xW^{-1}|=|W|+|xW^{-1}|-|W \cup xW^{-1}|\geq 2|W|-|G|>0$. So there exists $v_1 \in W\cap xW^{-1}$ such that $v_1=xv_2^{-1}$ for some $v_2 \in W$ yielding $v_2v_1=x \in W$. 

Let $t=0$ if $|W|$ is even and $t=1$ if $|W|$ is odd. Let $H=G\times \mathbb{Z}_2$ and $V=W \times \mathbb{Z}_2 \setminus \{(v_1,0), (v_2,1)\}$ and $C=W \times \mathbb{Z}_2 \setminus \{(v_2v_1,1-t)\}$. Then
\begin{align*}
(v_1,0)+(v_2,1)+2\sum V & =(x,1)+2\left (2\sum W,t \right )-2(x,1)=-(x,1),\\
\sum C& =\left (2\sum W, t \right )-(x,1-t)=-(x,1),
\end{align*}
in $H^{ab}=G^{ab} \times \mathbb{Z}_2$. It follows from Theorem \ref{mpt} that there is a rainbow Hamilton path $g_1, g_2,\ldots, g_{2k}$ in $K^+[\{(v_1,0),(v_2,1)\}\cup V;C]$ from $g_1=(v_1,0)$ to $g_{2k}=(v_2,1)$. 
We claim that $\tau(g_1),\ldots, \tau(g_{2k})$ is a doubly harmonious sequence in $W$, where $\tau: H \rightarrow G$ is the projection onto the first component. The first components in $g_1,\ldots, g_{2k}$ contain each $g\in W$ exactly twice. Moreover, each element of $W\backslash \{v_2v_1\}$ has appeared as a first component twice among $g_{i-1}g_{i}$, $2\leq i \leq 2k$, while $v_2v_1$ has appeared exactly once. Additionally, $v_2v_1=\tau(g_{2k}g_1)$, which completes the proof that the sequence is doubly harmonious in $G$. 

Second, consider the partial R-sequencing case. Without loss of generality, suppose $1_G \notin W$. We claim that there exist distinct $u_1,u_2 \in W$ such that $u_2^{-1}u_1 \in W$. Let $y\in W$ be arbitrary and note that $|W\cap yW|\geq 2|W|-|G|>0$. So there exists $u_1 \in W$ such that $u_1=yu_2$ for some $u_2 \in W$ yielding $u_2^{-1}u_1=y \in W$. 

Let $V'=W \times \mathbb{Z}_2 \setminus \{(u_1,0), (u_2,1)\}$, and $C'=W \times \mathbb{Z}_2 \setminus \{(u_2^{-1}u_1,1-t)\}$. Then
$$\sum C'=\left (2\sum W,t \right)-(y,1-t)=-(y,1)=(u_2,1)-(u_1,0),$$
in $H^{ab}$. By Theorem \ref{mpt}, there exists a rainbow Hamilton path $g_1,g_2,\ldots, g_{2k}$ from $g_1=(u_1,0)$ to $g_{2k}=(u_2,1)$ in $K^-[\{(u_1,0),(u_2,1)\cup V';C']$. The first components of this sequence contain each element of $W$ exactly twice. Moreover, each element of $W\setminus \{u_2^{-1}u_1\}$ appears exactly twice as a first component among $g_{i-1}^{-1}g_i$, $2\leq i \leq 2k$, while $u_2^{-1}u_1$ appears exactly once. Additionally, $u_2^{-1}u_1=\tau(g_{2k}^{-1}g_1)$, which proves that we have obtained a double R-sequencing of $W$. 
\end{proof}

\begin{theorem}\label{bun2}
Let $G$ be a sufficiently large group. Then $G$ is doubly sequenceable, doubly R-sequenceable, doubly harmonious, and doubly R-harmonious.
\end{theorem}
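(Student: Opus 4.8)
The plan is to split on whether $G$ is an elementary abelian $2$-group. If $G\cong(\mathbb{Z}_2)^k$ then $G$ is abelian of order at least $4$, and all four conclusions are already known from the abelian theory: double sequenceability by \cite{Jav}, double harmoniousness and double R-harmoniousness by \cite{DJ}, and double R-sequenceability by Theorem \ref{drs}. So I would assume henceforth that $G$ is not an elementary abelian $2$-group, which is exactly the standing hypothesis needed to invoke Lemma \ref{dhr} and to apply Theorem \ref{mpt} without the $1_G\notin C$ caveat. The one arithmetic fact I would record first is that $2\sum G=0$ in $G^{ab}$ for every finite group: pairing $g$ with $g^{-1}$ gives $\pi(g)+\pi(g^{-1})=0$, so $\sum G=\sum_{g^2=1_G}\pi(g)$, and each such $g$ satisfies $2\pi(g)=\pi(g^2)=0$. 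Deleting $1_G$ leaves the sum unchanged, so also $2\sum(G\setminus\{1_G\})=0$.

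Three of the four properties then drop out of Lemma \ref{dhr}. Taking $W=G$ (so $|W|=n\geq n-\sqrt{n/2}$ and $2\sum W=0$) shows $G$ is doubly harmonious. Taking $W=G\setminus\{1_G\}$ (so $|W|=n-1\geq n-\sqrt{n/2}$ and $2\sum W=0$) shows at once that $W$ is doubly R-sequenceable, i.e.\ $G$ is doubly R-sequenceable, and that $W$ is doubly harmonious, i.e.\ $G$ is doubly R-harmonious.

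The remaining case, double sequenceability, is not covered by Lemma \ref{dhr}, so I would treat it directly by lifting to $H=G\times\mathbb{Z}_2$ and applying Theorem \ref{mpt}(i), mimicking the proof of Lemma \ref{dhr}. Set $x=(1_G,0)$, $y=(1_G,1)$, $V=H\setminus\{x,y\}$, and $C=H\setminus\{c_0\}$ with $c_0=(1_G,\epsilon)$, where $\epsilon\equiv|G|+1\pmod 2$. Using $\sum H=(2\sum G,\,|G|\bmod 2)=(0,|G|\bmod 2)$ in $H^{ab}=G^{ab}\times\mathbb{Z}_2$, one checks $\sum C=(0,1)=y-x$, so Theorem \ref{mpt}(i) yields a rainbow Hamilton sequence $g_1,\ldots,g_{2n}$ from $x$ to $y$ in $K^-[H;C]$. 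Projecting through $\tau:H\to G$, the first components $\tau(g_1),\ldots,\tau(g_{2n})$ cover each element of $G$ twice with $\tau(g_1)=1_G$, while the consecutive quotients $\tau(g_i^{-1}g_{i+1})$ realize $\tau(C)$, i.e.\ every element twice except $\tau(c_0)=1_G$, which appears once; prepending the term $1_G$ restores this copy, so ${\bf \bar g}\in{\cal P}_2(G)$ and $G$ is doubly sequenceable.

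I expect this last case to be the only real obstacle: it requires selecting the deleted color $c_0$ and the endpoint $y$ so that the missing quotient projects to exactly $1_G$ (so that prepending $g_1=1_G$ rebalances the multiset) while meeting the coordinate parity constraint $\sum C=y-x$ of Theorem \ref{mpt}(i). The bookkeeping stays light because $2\sum G=0$ kills the first coordinate of $\sum H$, leaving a single $\mathbb{Z}_2$-parity to balance.
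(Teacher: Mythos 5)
Your proposal is correct, and for three of the four properties it is the paper's argument: the paper likewise feeds $W=G$ and $W=G\setminus\{1_G\}$ into Lemma \ref{dhr}, using the identity $2\sum G=0$ that you prove. The differences lie in the case decomposition and in the treatment of double sequenceability. The paper splits off \emph{all} abelian groups, handling them by \cite{DJ,Jav} and Theorem \ref{drs}, whereas you split off only the elementary abelian $2$-groups and let Lemma \ref{dhr} absorb the remaining abelian groups; both are fine, since the lemma nowhere assumes non-abelianness. More substantively, for double sequenceability the paper says that sufficiently large non-abelian groups are sequenceable by \cite{MP}, ``hence doubly sequenceable'' --- an implication that is genuinely nontrivial (naive doubling of a sequencing fails: the wrap-around quotient $g_n^{-1}g_1$ appears in place of a second copy of $g_1=1_G$, unbalancing the multiset) and rests on the results of \cite{Jav}. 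You instead prove double sequenceability directly, by running the lifting trick from Lemma \ref{dhr} in the $K^-$ setting: with $H=G\times\mathbb{Z}_2$, $x=(1_G,0)$, $y=(1_G,1)$, and $C=H\setminus\{(1_G,\epsilon)\}$ for $\epsilon\equiv|G|+1\pmod 2$, your parity check $\sum C=(0,1)=y-x$ is correct, the projection $\tau$ covers each element of $G$ twice among the terms, and the one missing color projects to $1_G$, which is exactly supplied by the leading term $g_1'=1_G$ built into the definition of ${\bf \bar g}$. Your route buys self-containedness --- everything reduces to Theorem \ref{mpt}, with no appeal to a sequenceable-implies-doubly-sequenceable theorem --- at the cost of one extra explicit application of the machinery; the paper's route is shorter but leans on \cite{Jav} and \cite{MP} for that step.
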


\begin{proof}
The abelian cases are treated in \cite{DJ,Jav} and Theorem \ref{drs}. All sufficiently large non-abelian groups are sequenceable, hence doubly sequenceable. The rest of the claims follow from Lemma\ \ref{dhr}, since $2\sum G=0$ for every finite group $G$.  
\end{proof}

\end{document}